\documentclass[12pt, a4paper]{amsart}
\usepackage{amsmath}
\usepackage{geometry,amsthm,graphics,tabularx,amssymb,shapepar}
\usepackage{fancybox,color,epsf,slidesec,epic,eepic,amsmath,amsfonts,amssymb}
\usepackage{amscd}
\usepackage{mathrsfs}
\usepackage{epsfig}
\usepackage[all,cmtip]{xy}

\usepackage{graphicx}
\usepackage{aurical}
\usepackage{url}
\usepackage{youngtab}
\usepackage[small,nohug,heads=vee]{diagrams}


\newcommand{\CK}{{\mathcal {K}}}

\newcommand{\CM}{{\mathcal {M}}}
\newcommand{\CN}{{\mathcal {N}}}
\newcommand{\CO}{{\mathcal {O}}}

\newcommand{\CR}{{\mathcal {R}}}

\newcommand{\CX}{{\mathcal {X}}}
\newcommand{\CY}{{\mathcal {Y}}}

\newcommand{\SY}{\mathscr{Y}}
\renewcommand{\SS}{\mathscr{S}}
\newcommand{\SV}{\mathscr{V}}
\newcommand{\SA}{\mathscr{A}}
\newcommand{\sO}{\mathscr{O}}

\newcommand{\rD}{{\mathrm {D}}}

\newcommand{\rF}{{\mathrm {F}}}

\newcommand{\rH}{{\mathrm {H}}}

\newcommand{\Ad}{{\mathrm{Ad}}}

\newcommand{\Ch}{{\mathrm{Ch}}}

\newcommand{\GL}{{\mathrm{GL}}}

\newcommand{\Hom}{{\mathrm{Hom}}}

\renewcommand{\Im}{{\mathrm{Im}}}

\newcommand{\Ker}{{\mathrm{Ker}}}

\newcommand{\SO}{{\mathrm{SO}}}

\newcommand{\Sp}{{\mathrm{Sp}}}
\newcommand{\Stab}{{\mathrm{Stab}}}
\newcommand{\Span}{{\mathrm{Span}}}

\newcommand{\GNM}{{\mathrm{GNM}}}

\newcommand{\wt}{\widetilde}

\newcommand{\cover}[1]{\widetilde{#1}}

\newcommand{\vsp}{{\vspace{0.2in}}}

\newcommand{\Irr}{\operatorname{Irr}}

\newcommand{\ad}{\operatorname{ad}}

\newcommand{\Tr}{\operatorname{Tr}}

\newcommand{\oG}{\operatorname{G}}

\newcommand{\g}{\mathfrak g}

\renewcommand{\k}{\mathfrak k}

\newcommand{\p}{\mathfrak p}

\newcommand{\n}{\mathfrak n}
\renewcommand{\u}{\mathfrak u}

\newcommand{\m}{\mathfrak m}

\renewcommand{\sl}{\mathfrak s \mathfrak l}

\newcommand{\Z}{\mathbb{Z}}
\newcommand{\C}{\mathbb{C}}
\newcommand{\R}{\mathbb R}

\newcommand{\DD}{\mathcal D}

\newcommand{\la}{\langle}
\newcommand{\ra}{\rangle}

\newcommand{\be}{\begin {equation}}
\newcommand{\ee}{\end {equation}}
\newcommand{\bee}{\begin {equation*}}
\newcommand{\eee}{\end {equation*}}

\newcommand{\bfee}{{\mbox{\bf e}}}
\newcommand{\Orb}{\mathcal{O}}
\newcommand{\Wh}{\operatorname{Wh}}
\newcommand{\Gen}{\operatorname{Gen}}
\newcommand{\Sch}{\mathscr{C}}
\def\DD{\nabla}
\def\gDD{\nabla^{\mathrm{gen}}}

\newcommand{\set}[2]{ \left\{ {#1} \, \left| \, {#2} \right\}\right.}

\newcommand{\bfV}{\mathbf{V}}
\newcommand{\bfS}{\mathbf{S}}
\newcommand{\bfM}{\mathbf{M}}
\newcommand{\bfW}{\mathbf{W}}
\newcommand{\bfWo}{\mathbf{W}^{\circ}}
\newcommand{\bfG}{\mathbf{G}}
\newcommand{\bfK}{\mathbf{K}}
\newcommand{\Nil}{\mathrm{Nil}}
\newcommand{\AV}{\mathrm{AV}}
\newcommand{\bcO}{\overline{\mathcal{O}}}

\DeclareMathOperator{\Ann}{Ann}
\newcommand{\wtbfK}{\widetilde{\mathbf{K}}}
\newcommand{\wtK}{\widetilde{{K}}}
\newcommand{\wtG}{\widetilde{{G}}}
\def\Thetav{\check{\Theta}}

\newcommand{\oliftc}{\boldsymbol{\theta}}

\def\NilGC{\Nil_{\bfG}(\g)}
\def\NilGCp{\Nil_{\bfG'}(\g')}

\def\mktvvp{\varsigma_{{V},{V}'}}

\def\DD{\nabla}
\def\DDc{\boldsymbol{\nabla}}
\def\gDD{\nabla^{\mathrm{gen}}}

\def\dliftv{\vartheta}
\def\Thetab{\bar{\Theta}}
\def\MM{\bfM}
\def\MMP{M}
\def\Xo{\CX^\circ}
\def\Stab{\mathrm{Stab}}

\def\bfii{\mathbf{i}}

\theoremstyle{Theorem}

\theoremstyle{plain}
\newtheorem{thm}{Theorem}[section]

\newtheorem{lem}[thm]{Lemma}
\newtheorem{prop}[thm]{Proposition}

\newtheorem{defn}[thm]{Definition}
\newtheorem{rmk}[thm]{Remark}

\title[Local theta correspondence and nilpotent invariants]{Local theta correspondence and nilpotent invariants}

\author{Chen-Bo Zhu}
\address{Department of Mathematics\\
National University of Singapore\\
Block S17, 10 Lower Kent Ridge Road, Singapore 119076}
\email{matzhucb@nus.edu.sg}

\begin{document}

\subjclass[2000]{22E46 (Primary)}
\keywords{Dual pairs, local theta lifting, generalized Whittaker models, associated varieties, associated cycles, associated characters}

\dedicatory{Dedicated to Joseph Bernstein on the occasion of his seventy-second birthday}


\begin{abstract} We consider two types of nilpotent invariants associated to smooth representations, namely generalized Whittaker models, and associated characters (in the case of a real reductive group). We survey some recent results on the behavior of these nilpotent invariants under local theta correspondence, and highlight the special role of a certain double fiberation of moment maps.

\end{abstract}

\maketitle

\section{Introduction}
\label{sec:0}
One of the most fruitful approaches in representation theory is to understand representations through their invariants. Among the most well-known invariants are matrix coefficients (and their growth), and the character of a representation, which are of analytic nature. In the case of a real reductive group, fundamental invariants of algebraic nature include the infinitesimal character and the restriction of the representation to a maximal compact subgroup. Each of the afore-mentioned invariants has played a prominent role in the most important tasks of representation theory, for example in the classification of representations \cite{La,Vo79}.

In the current article, we will be concerned with smooth representations of a reductive group defined over a local field $\rF$ of characteristic $0$. By a smooth representation we mean a smooth representation in the usual sense for $\rF$ non-Archimedean, namely it is locally constant, and a Casselman-Wallach representation for $\rF$ Archimedean \cite{Ca89, Wa2}.

The invariants we will consider will be informally referred to as nilpotent invariants as they depend on various types of nilpotent orbits and thus reflect various kinds of singularities.

The first nilpotent invariant to be examined is the space of generalized Whittaker models \cite{Ya86,MW87,WaJI}. As it is well-known, the study of Whittaker and generalized Whittaker models for representations evolved in connection with the theory of automorphic forms, and has found numerous applications. See for example \cite{Sh74,NPS73,Ko78,Ka85,Ya86, WaJI}. We will recall the basic definitions regarding generalized Whittaker models in Section \ref{subsec:GWM}.

The second nilpotent invariant to be examined is defined by Vogan \cite{Vo89} for a Harish-Chandra $(\g,K)$-module (and therefore for a Casselman-Wallach representation of a real reductive group $G$), and is called the associated character. This is an element of the Grothendieck group of $\bfK$-equivariant coherent sheaves on $\CN (\p)$ (the nilpotent cone on $\p$), and is a refinement of the associated cycle, which is a (finite) union of the closure of $\bfK$-orbits in $\CN(\p)$ with multiplicities. Here $\g=\mathfrak{k}\oplus \mathfrak{p}$ is the complexified Cartan decomposition, and $\bfK$ is the complexification of $K$.
We will recall the basic definitions regarding the associted character in Section \ref{subsec:AC}.

\begin{rmk} Another well-known nilpotent invariant (which we will not examine) is the wave front cycle,
defined by Harish-Chandra in the non-Archimedean case  (\cite{HC78}) and by Howe and Barbasch-Vogan in the Archimedean case (\cite{HoWF,BV80}; see also
\cite{Ro95}). In the non-Archimedean case, M\oe{}glin and Waldspurger have established \cite{MW87} that the wave front cycle controls the spaces of generalized Whittaker models.
In the Archimedean case, Schmid and Vilonen have shown \cite{SV00} that the wave front cycle and the associate cycle determine each other through the Kostant-Sekiguchi correspondence \cite{Se87}.
\end{rmk}

\begin{rmk} While both of nilpotent invariants examined in this paper are defined for a smooth representation, it is generally very difficult to compute them, even when we have a precise description of the representation.
\end{rmk}

\vsp

We now come to the other part of the title regarding local theta correspondence. Let $(G,G')\subseteq \Sp (W)$ be a reductive dual pair of type I in the sense of Howe \cite{Ho79}.
By fixing a non-trivial unitary character $\psi$ of $\rF$, we have the smooth oscillator representation of $\cover{\Sp}(W)$ associated to $\psi$. For a smooth irreducible genuine representation $\pi$ of
$\cover{G}$, we have the full theta lift $\Theta(\pi)$ of $\pi$, which is a smooth representation of $\cover{G'}$ of finite length. For readers' convenience, we have included a brief review of the theory of dual pairs and local theta correspondence in Section \ref{sec:DPreview}. The question we wish to address in the current article is the behavior of these invariants under local theta correspondence, more specifically the relationship of the nilpotent invariants of $\Theta (\pi)$ with those of $\pi$.

In the first part of this article (Section \ref{sec:GWM}), we focus on the behavior of the generalized Whittaker models under local theta correspondence. We shall summarize the result as follows. Let $\CO$ be a nilpotent orbit in the Lie algebra $\g$ of $G$. Let $\gamma=\{H,X,Y\}$ be an $\sl_2$-triple with $X\in \CO$ and $P=MN$ the associated parabolic subgroup of $G$. Then we have an oscillator representation or a $1$-dimensional character $\SS_{{\gamma}}$ of $\cover{M_X}\ltimes N$, where $\cover{M_X}$ is a double cover of the stabilizer group $M_X$ of $X$ in $M$. For a reductive subgroup $R$ of $M_X$ and a genuine representation $\tau$ of $\cover{R}$, we consider its model
\begin{equation*}
\Wh_{\Orb,\tau}(\pi)=\Hom_{\cover{R}\ltimes N}(\mathscr{V},\mathscr{V}_{\tau}\widehat \otimes
\SS_{{\gamma}}).
\end{equation*}
(When $R$ is the trivial group, we will simply write $\Wh_{\Orb}(\pi)$.)

To any nilpotent orbit $\CO'\subset \g'$ in the image of the moment map, one may associate a nilpotent orbit $\CO \subset \g$ via the moment maps, called the generalized descent of $\CO'$. The associated stabilizer groups $M_X$ and $M'_{X'}$ are of the form
\begin{equation*}
M_{X} \supset M_{X, X'}\times L, \qquad M'_{X'}=M_{X, X'}\times L',
\end{equation*}
for some reductive dual pair $(L,L')$ of the same type as $(G,G')$. Then the main result (which generalizes \cite{GZ}, in the so-called descent case where $L$ is trivial and $M_X=M_{X,X'}$) says that
\[\Wh_{\Orb',\tau '}(\Theta (\pi ))\simeq \Wh_{\Orb, \Theta (\tau')^{\vee}}(\pi ^{\vee})\]
as $\cover{M_{X, X'}}$-modules.  Here $\Theta (\tau')$ is the full theta lift of $\tau '$ with respect to the dual pair $(L,L')$, and the symbol $\vee$ indicates the dual in the category of Casselman-Wallach representations. We also prove that if $\Orb'$ is not in the image of the moment map, then
\[\Wh_{\Orb'}(\Theta (\pi))=0.\]
For precise statements and unexplained notations, see Sections \ref{subsec:GD} and \ref{subsec:Trans}.

\vsp
In the second part of this article (Section \ref{sec:AC}), we focus on the behavior of the associated character under local theta correspondence (over $\R$). We shall also summarize the result. Let $\CO$ be a nilpotent orbit in $\g$, now the complexified Lie algebra of $G$. To any $(\g,\cover{K})$ module $\Pi$ of finite length whose complex associated variety is contained in the closure of $\CO$, Vogan associates an element (called the associated character)
\[\Ch_{\CO}(\Pi)\in \CK_{\CO}(\wt{\bfK}):=\bigoplus
  \mathrm \CK_{\sO}(\wt{\bfK})\]
  where $\sO$ runs over $\bfK$-orbits in $\CO\cap \p$, and $\CK_{\sO}(\wt{\bfK})$ is the Grothendick group of $\wt{\bfK}$-equivariant algebraic vector bundles on $\sO$.

Suppose that nilpotent orbits $\CO\subset \g$ and $\CO'\subset \g'$ are the images of an element of maximal rank under the moment maps. Then the result of \cite{MSZ} gives an upper bound
 \begin{equation*}
    \Ch_{\CO'}(\Theta (\pi ^{\vee}))\preceq \dliftv_{\CO,\CO'}(\Ch_{\CO}(\pi )),
  \end{equation*}
  where $\dliftv_{\CO,\CO'}$ is a certain sum of maps between the Grothendieck groups of algebraic vector bundles and is also defined via the moment maps. Moreover, if $(\pi, G')$ is in the so-called convergent range (see Definition \ref{defn:CR}), then this inequality becomes an equality with $\Theta (\pi^{\vee})$ replaced by a certain quotient $\Thetab (\pi)$ of $\Theta (\pi^{\vee})$.  We refer the reader to Sections \ref{subsec:lift.AC} and \ref{subsec:UBEQ} for precise statements and unexplained notations.

Our results have the following immediate consequences: if $\Wh_{\Orb, \Theta (\tau')^{\vee}}(\pi ^{\vee})\ne 0$ for some $\Orb$ and $\tau '$, then $\Theta (\pi)\ne 0$. For example if $(G,G')$ is in the so-called stable range with $G$ the smaller member \cite{Li89}, one may find a nilpotent orbit $\Orb'$ in $\g'$ which descents to the zero orbit in $\g$ (such an $\Orb'$ is called quadratic nilpotent). This immediately implies that $\Theta (\pi)\ne 0$ for any smooth irreducible genuine representation $\pi$ of $\cover{G}$.
Similarly, if $(\pi, G')$ is in the convergent range, and if
$\dliftv_{\CO,\CO'}(\Ch_{\CO}(\pi ))\ne 0$ for some $\CO\subset \g$ and $\CO'\subset \g'$ in the images of an element of maximal rank under the moment maps, then $\Thetab (\pi)\ne 0$ and consequently $\Theta (\pi^{\vee})\ne 0$.

Each of the above provides an effective criterion for the nonvanishing of $\Theta (\pi)$. Both are internal in the sense that one does not invoke other dual pairs or other representations. Of course conservation relations (conjectured by Kudla and Rallis \cite{KR2} and established by B. Sun and the author in its full generality \cite{SZ}) provide one other way to determine the vanishing/nonvanishing, which is external. (We refer the reader to \cite{SZ2} for a easy read on the conservation relations.) Note that vanish/non-vanishing of $\Theta (\pi)$ amounts to an inequality on the first occurrence index of $\pi$ in a (generalized) Witt tower, and so will yield information on the non-vanishing/vanishing in another (generalized) Witt tower. The two results on the correspondence of nilpotent invariants surveyed in this article, when combined with conservation relations, will thus provide additional information on the questions of vanishing and nonvanishing in local theta correspondence.

As a concluding remark, the author wishes to highlight in this article the special role of the double fiberation of moment maps in questions on the local theta correspondence. Important earlier work on this double fiberation of moment maps include those of Kraft-Procesi \cite{KP82} and Prezbinda \cite{Pz}. More recent work connected to the theme of this article include \cite{NOTYK,NZ,LM,GZ,MSZ}.

\vsp
\noindent {\bf Acknowledgements}: The author would like to thank the anonymous referee for his thoughtful comments and suggestions, which helped to substantially improve the exposition of this article.  The author would also like to thank Kyo Nishiyama whose collaboration a decade ago has helped to shape the author's thinking on nilpotent invariants of smooth representations.

\section{Dual pairs and local theta correspondence: a brief review}
\label{sec:DPreview}
We review the basic set-up of the theory of dual pairs \cite{Ho79}.

Let $\rF$ be a local field of characteristic zero, and let $\rD$ be one of the following division algebras over $\rF$:
the field $\rF$ itself, a quadratic field extension of $\rF$ or the
central quaternion division algebra over $\rF$.

Let $\epsilon=\pm 1$, and $V$ be an $\epsilon$-Hermitian space, namely a finite dimensional right $\rD$-vector space, equipped with the $\epsilon$-Hermitian form
$(\cdot , \cdot  )_{V}$. The isometry group $G=\oG(V)$ is a type I classical group, as in the following table.
\vsp

\setlength{\tabcolsep}{3pt} 

\begin{tabular}{c|ccc}
$\rD$ & $\rF$ & \ quadratic extension & quaternion algebra\\ 
\hline
$\epsilon=1$ & orthogonal group & unitary group  & quaternionic symplectic group\\ 
\hline 
$\epsilon=-1$ & symplectic group & unitary group  & quaternionic orthogonal group \\
\hline 

\end{tabular}

\vsp

Let $V'$ be an $\epsilon '$-Hermitian right $\rD$-vector space, equipped with the $\epsilon '$-Hermitian form
$(\cdot , \cdot  )_{V'}$, where $\epsilon \epsilon '=-1$. Denote by $G'=\oG(V')$ the corresponding isometry group.
Let $W=:\Hom_{\rD}(V,V')$, and we define a symplectic form $\langle\cdot , \cdot  \rangle _W$ on $W$ by setting
\[
\la T, S\ra_{W}:=\Tr_{\rD/\rF} (T^{\ast}S), \qquad \mbox{$T$, $S\in \Hom_{\rD}(V,V')$,}
\]
where $\Tr_{\rD/\rF} (T^{\ast}S)$ is the trace of $T^{\ast}S$ as an $\rF$-linear transformation, and $T^{\ast}\in \Hom_{D}(V',V)$ is defined by
\[
(Tv,v')_{V'}=(v,T^{\ast}v')_{V}, \qquad \mbox{$v\in V$, $v'\in V'$.}
\]
Let $\Sp (W)$ be the isometry group of $\langle\cdot , \cdot  \rangle _W$.

There is a natural homomorphism: $G \times G'\longrightarrow \Sp(W)$ given by
\[
 (g,g')\cdot T = g' T g^{-1} \qquad \mbox{for $T \in \Hom_{\rD}(V,{V}')$, $g\in G$, $g'\in G'$}.
\]

Fix a non-trivial unitary character $\psi$ of $\rF$, and let $(\omega , \SY )$ be the smooth oscillator representation of $\cover{\Sp}(W)$ associated to $\psi$. Here $\cover{\Sp}(W)$ is the metaplectic cover of $\Sp(W)$, namely the unique topological central extension of the symplectic group $\Sp (W)$ by $\{\pm1\}$ which does not split unless $W=0$ or $\rF=\C$.
For a subgroup $E$ of $\Sp (W)$, let $\cover{E}$ be the inverse image of $E$ in $\cover{\Sp}(W)$. By restriction, $(\omega , \SY)$ yields a representation $
\omega|_{\cover{G}\times \cover{G'}}$, and the basic problem is to understand this restriction.

The main assertion of the theory of local theta correspondence is the Howe duality conjecture \cite{Ho79}: the condition $\Hom_{\cover{G}\times \cover{G'}}(\omega,\pi\widehat \otimes \pi ')\neq 0$
\footnote{Here $\widehat \otimes$ denotes algebraic tensor product for $\rF$ non-Archimedean, and
completed projective tensor product for $\rF$ Archimedean. See \cite{BS} on the issue of algebraic versus smooth correspondences for $\rF$ Archimedean.} determines a one to one correspondence
 \[
\{\pi\in \Irr(\cover{G})\mid \Hom_{\cover{G}}(\omega,\pi)\neq 0\}\smallskip
  \longleftrightarrow \{\pi '\in \Irr(\cover{G'})\mid \Hom_{\cover{G'}}(\omega,\pi ')\neq 0\}.
  \]
Here and as usual, $\Irr (R)$ denotes the set of equivalent classes of irreducible smooth representations of a reductive group $R$.

The Howe duality conjecture is now a theorem in its full generality. It was established by Howe when $\rF$ is Archimedean \cite{Ho89}, by Waldspurger when $\rF$ is $p$-adic (for $p\ne 2$) \cite{Wald}, and completed by Minguez \cite{Mi}, Gan-Takeda \cite{GT} and Gan-Sun \cite{GS}, without any restriction on the residue characteristics.

It is convenient to introduce the following version of the Howe duality theorem: Let $(\pi,\SV)$ be a smooth irreducible genuine representation of
$\cover{G}$, and consider the maximal $\pi$-isotypic quotient of $\SY$, called the Howe quotient of $\pi $. It is of the form
$\SV \widehat \otimes \Theta(\SV)$, where $\Theta(\SV)$ carries a smooth
representation $\Theta(\pi)$ of $\cover{G'}$. The representation $\Theta(\pi)$ is often referred to as the
full theta lift (or colloquially the big theta lift) of $\pi$. The fundamental results of Howe, Waldspurger, Minguez, Gan-Takeda, and Gan-Sun say that $\Theta(\pi)$
is an admissible representation of finite length, moreover, when $\Theta(\pi)$ is non-zero, it has a unique irreducible quotient $\theta(\pi)$, called the (local) theta lift of $\pi$.

\section{Correspondence of generalized Whittaker models}
\label{sec:GWM}

In this section we will discuss a refinement, due to Gomez and the author, of a previous work \cite{GZ}, which computes the generalized Whittaker models of the full theta lift in a general setting. Related earlier works on transition of models include those of Furusawa \cite{Fu95}, M\oe{}glin \cite{Mo98} and Ginzburg-Jiang-Soudry \cite{GJS}. See also Gan's article in this volume \cite{Ga}.

\subsection{Generalized Whittaker models}
\label{subsec:GWM}
We recall some basics of generalized Whittaker models \cite{Ya86,MW87,WaJI}.
Let $G$ be a reductive group
over $\rF$, $\g$ its Lie algebra, on which we fix an
$\Ad\,G$-invariant non-degenerate bilinear form $\kappa$.
Let $\Orb \subset \g$ be a nilpotent orbit, and $X\in \Orb$. We complete $X$ to an $\sl_{2}$-triple $\gamma=\{X,H,Y\}\subset \g$, namely
\[
[H,X]=2X,\qquad [H,Y]=-2Y, \qquad [X,Y]=H.
\]
Set $\g_{j}=\set{Z\in \g}{\ad(H)Z=jZ}$, for $j\in \Z$. Then, from standard
$\sl_{2}$-theory, we have a finite direct sum $\g=\oplus_{j\in \Z}\g_{j}$.
Define the Lie subalgebras $\u=\oplus_{j\leq -2}
\g_{j}$, $\n=\oplus_{j\leq -1} \g_{j}$, $\p=\oplus_{j\leq 0} \g_{j}$
and $\m=\g_{0}$. Let $U$, $N$, $P$, and $M$ be the corresponding
subgroups of $G$. Thus $U=\exp \u$, $N=\exp \n$, $P=\set{p\in G}{\Ad(p)\p
\subset \p}$ and $M=\set{m\in G}{\mbox{$\Ad(m)H=H$}}$. Define the (non-degenerate) character
$\chi_{\gamma}$ of $U$ by
\begin{equation}
\label{defchi}
\chi_{\gamma}(\exp Z)=\psi (\kappa(X,Z)), \ \ \ \ \forall \ Z\in \u.
\end{equation}
Let $M_{X}=\set{m\in M}{\mbox{$\Ad(m)X=X$}}$, the stabilizer group of $X$ in $M$. Then it is well-known
\cite[Section 3.4]{CM92} that
\[
M_{X}=G_{\gamma}:=\set{g\in G}{\mbox{$\Ad(g)X=X$, $\Ad(g)Y=Y$, $\Ad(g)H=H$}}.
\]
In particular $M_{X}$ is reductive. For the moment assume that $\g_{-1}\neq 0$. In this case
$\ad(X)|_{\g_{-1}}:\g_{-1}\longrightarrow \g_{1}$ is an isomorphism,
and we may define a symplectic structure on $\g_{-1}$ by setting
\begin{equation}
\label{defsymg-1}
\kappa_{-1}(S,T)=\kappa(\ad(X)S,T)=\kappa(X,[S,T]), \qquad \mbox{for $S$, $T\in \g_{-1}$}.
\end{equation}
We may exhibit a canonical surjective group homomorphism from $N$ to the associated Heisenberg group $\rH_{\g_{-1}}$ which maps $\exp Z$ to $\kappa(X,Z)$ in the center of $\rH_{\g_{-1}}$, for $Z\in \u$. Then, according to the Stone-von
Neumann theorem, there exists a unique, up to equivalence, smooth
irreducible (unitarizable) representation $\SS_{\gamma}$ of $N$ such that $U$
acts by the character $\chi_{\gamma}$. Since $M_{X}$ preserves
$\gamma $ and thus the symplectic form $\kappa_{-1}$, it is well-known \cite{Weil} that the representation $\SS_{{\gamma}}$ extends to the semi-direct product $\cover{M_X}\ltimes N$, where
$\cover{M_X}$ is the inverse image of $M_{X}$ in $\cover{Sp}(\g_{-1})$.
If $\g_{-1}=0$, we will use the same notation $\SS_{{\gamma}}$ to denote the $1$-dimensional
representation of $N=U$ given by the character $\chi_{\gamma}$, and $\cover{M_X}:=M_{X}$ acts on it trivially.

\begin{defn}
\noindent (a)
Let $(\pi,\mathscr{V})$ be a smooth representation of $G$. We define
the space of generalized Whittaker models of $\pi$ associated to the nilpotent orbit $\Orb$
by
\[
\Wh_{\Orb}(\pi)=\Hom_{N}(\mathscr{V},\SS_{{\gamma}}).
\]
$\Wh_{\Orb}(\pi)$ is naturally an $\cover{M_{X}}$-module.

\noindent (b) Let $R$ be a reductive subgroup of $M_{X}$. Also define
\[
\Wh_{\Orb,\tau}(\pi)=\Hom_{\cover{R}N}(\mathscr{V},\mathscr{V}_{\tau}\widehat \otimes
\SS_{{\gamma}}),
\]
where $(\tau,\mathscr{V}_{\tau})$ is a smooth genuine representation of $\cover{R}$.
\end{defn}

\begin{rmk} In the representation theory literature, elements of $\Wh_{\Orb,\tau}(\pi)$ are referred to as (local) models, which are generalizations of Bessel and Fourier-Jacobi models. (Typically one takes $R$ to be a relatively big (e.g. spherical) subgroup of $M_X$.) We refer the reader to \cite{GGP} for more information on these special models such as uniqueness. See also \cite{Ji07} which discusses their roles in a general theory of periods of automorphic forms.
\end{rmk}

We now assume that $G$ is a type I classical group defined by an $\epsilon$-Hermitian space $(V,B)$, where $B$ is the $\epsilon$-Hermitian form.
Set $\g_{\gamma}=\Span_{\rF}\{X,H,Y\}\subset \g$. Under the action of $\g_{\gamma}$, we have the isotypic decomposition:
\begin{equation*}
 V=\bigoplus_{j=1}^{l} V^{\gamma,t_{j}},
\end{equation*}
where $V^{\gamma,t_{j}}$ is a direct sum of irreducible $t_{j}$-dimensional $\g_{\gamma}$-modules, and $t_{1}> t_{2}> \ldots > t_{l}>0$.

We need one more notation. Denote by $(\rho_{m},\rF^{m})$ the irreducible representation of $\g_{\gamma}\simeq \sl_{2}$ of dimension $m$. There is a non-degenerate invariant bilinear form on $\rF^{m}$, which is unique up to scalar and is $(-1)^{m-1}$-symmetric. We fix one such form $(\cdot,\cdot)_{m}$, and denote the corresponded formed space by $(\rF^{m}, (\cdot,\cdot)_{m})$.

Let $V^{\gamma,t_{j}}_{t_j-1}$ be the space of highest weight vectors in $V^{\gamma,t_j}$ (the subscript refers to the $H$-weight), and let $i_j=\dim V^{\gamma,t_j}_{t_j-1}$, which is the multiplicity of $(\rho_{t_j},\rF^{t_j})$ in $V$. The isotypic decomposition gives rise to a partition or equivalently a Young diagram $\mathbf{d}^{\gamma}=[t_{1}^{i_{1}},\ldots,t_{l}^{i_{l}}]$ of size $\dim V$. Using $\sl_{2}$ theory and the $\epsilon$-Hermitian form $B$ on $V$, one may define a non-degenerate $\epsilon_j$-Hermitian form $B^{\gamma,t_{j}}_{t_j-1}$ on $V^{\gamma,t_{j}}_{t_j-1}$, where $\epsilon_j=(-1)^{t_j-1}\epsilon$.
As $\epsilon$-Hermitian spaces, we have
\begin{equation}
\label{eq:adm}
(V,B)\simeq \bigoplus _{j}(V^{\gamma,t_{j}}_{t_{j}-1},B^{\gamma,t_{j}}_{t_{j}-1}) \otimes (\rF^{t_j},(\cdot,\cdot)_{t_j}).
\end{equation}
We may also describe the stabilizer group $M_X$ as follows. Since $M_{X}$ acts on $V^{\gamma,t_{j}}_{t_{j}-1}$ preserving $B^{\gamma,t_{j}}_{t_{j}-1}$, there is an embedding of $G(V^{\gamma,t_{j}}_{t_{j}-1})$ into $M_{X}$, and we identify $G(V^{\gamma,t_{j}}_{t_{j}-1})$ with its image in $M_X$. Then we have
\begin{equation}\label{MX}
M_{X}\cong \prod_{j=1}^{l} G(V^{\gamma,t_{j}}_{t_{j}-1}).
\end{equation}

It is well-known \cite{CM92} that the pair, consisting of
\begin{itemize}
\item[(i)] the Young diagram $\mathbf{d}^{\gamma}=[t_{1}^{i_{1}},\ldots,t_{l}^{i_{l}}]$, and the collection of
\item[(ii)] the formed spaces $(V_{\gamma}(j),B_{\gamma}(j)):= (V^{\gamma,t_{j}}_{t_{j}-1},B^{\gamma,t_{j}}_{t_{j}-1})$ of dimension $t_j$ for $1\leq j\leq l$,
\end{itemize}
uniquely determines the $\sl_{2}$-triple $\gamma$ up to the Adjoint action of $G$. This is an example of an admissible $\epsilon$-Hermitian Young tableaux (or
admissible Young tableaux in short; admissibility refers to the requirement in \eqref{eq:adm}). With an obvious notion of equivalence, nilpotent orbits in $\g$ are then parameterized by equivalence classes of admissible $\epsilon$-Hermitian Young tableaux. See \cite[Section 3.2]{GZ}).


\subsection{Generalized descent of $\sl_{2}$-triples}
\label{subsec:GD}

We are back in the setting of Section \ref{sec:DPreview}, namely we are given $(G,G')=(G(V),G(V'))\subset \Sp(W)$, a type I reductive dual pair.

To ease notation, we will skip $D$ in $\Hom_D (V,V')$ from now on. Recall the moment maps: \cite{KP82,DKPC}
\[
\begin{diagram}
 & &  \Hom (V,V') & &   \\
&  \ldTo^{\varphi} & &\rdTo^{{\varphi}'} &  \\
\g&  & & &{\g}'
\end{diagram}
\]
where $\varphi (T)= T^{\ast}T$ and ${\varphi}'(T)=TT^{\ast}$.

Define
\begin{equation}
\label{eq:Gen}
\Gen\Hom(V,{V}') =\{ T\in \Hom(V,V')\, | \, \mbox{$\Ker (T)$ is non-degenerate}\}.
\end{equation}


For a given $\sl_{2}$-triple $\gamma=\{X,H,Y\}\subset \g$, set $V_{k}=\{v\in V\, | \, Hv=kv\}$, for $k\in \Z$. We have by standard $\sl_{2}$-theory,
\begin{equation*}
V=\bigoplus_{k\in \Z} V_{k}.
\end{equation*}
Similar notations apply  for an $\sl_{2}$-triple $\gamma' =\{X',H',Y'\}\subset \g'$.

\begin{defn}
Let $\gamma\subset \g,$ ${\gamma}'\subset {\g}'$ be two $\sl_{2}$-triples, and $T\in \Hom(V,{V}')$. We say that $T$ lifts $\gamma$ to ${\gamma}'$ if
\begin{enumerate}
\item $T\in \Gen\Hom(V,{V}')$.
\item $\varphi (T) =X$ and $\varphi '(T) ={X}'$.
\item $T(V_{k})\subset {V'_{k+1}}$ for all $k$.
\end{enumerate}
We set
\begin{equation}
\label{eq:Ogg'}
\Orb_{\gamma,{\gamma}'}=\{T\in \Hom(V,{V}')\, | \, \mbox{$T$ lifts $\gamma$ to ${\gamma}'$}\}.
\end{equation}
\end{defn}

\begin{lem}
\label{lem:descent}
Let $\gamma'=\{X',H',Y'\}\subset \g'$ be an $\sl_{2}$-triple. If $X'$ is in the image of $\varphi '$, then there is a unique conjugate class of
$\sl_{2}$-triple $\gamma=\{X,H,Y\}\subset \g$, such that $\Orb_{\gamma,{\gamma}'}$ is non-empty. Furthermore $\Orb_{\gamma,{\gamma}'}$ is a single $M_{X}\times {M'}_{{X'}}$-orbit.
\end{lem}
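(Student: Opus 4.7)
My plan is to establish the lemma in three stages: existence of a descended $\sl_{2}$-triple $\gamma$, its uniqueness up to $G$-conjugacy, and transitivity of the $M_{X}\times M'_{X'}$-action on $\Orb_{\gamma,\gamma'}$.

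\textbf{Existence.} I would pick any $T_{0}\in \Hom(V,V')$ with $\varphi'(T_{0})=X'$ (possible by hypothesis) and set $X:=\varphi(T_{0})=T_{0}^{*}T_{0}$. Nilpotency of $X$ follows from the identity $(T_{0}^{*}T_{0})^{k+1}=T_{0}^{*}(T_{0}T_{0}^{*})^{k}T_{0}$ together with the nilpotency of $X'$. The classical observation that $T_{0}T_{0}^{*}$ and $T_{0}^{*}T_{0}$ share the same nonzero Jordan blocks already forces the partition of $X$ to be a prescribed ``descent'' of that of $X'$. To upgrade $X$ into a triple $\gamma=\{X,H,Y\}$ whose grading is compatible with $\gamma'$ through $T_{0}$, I would use an $\mathfrak{osp}(1|2)$-picture: view $V\oplus V'$ as a $\Z/2$-graded space and $\tilde{X}:=T_{0}\oplus T_{0}^{*}$ as an odd operator whose square is $X\oplus X'$. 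A super Jacobson--Morozov argument then embeds $\tilde{X}$ into an $\mathfrak{osp}(1|2)$-triple whose even part is $\gamma\oplus\gamma'$ and whose odd part recovers a lift $T$ satisfying $T(V_{k})\subset V'_{k+1}$ automatically from the commutation relations. Non-degeneracy of $\Ker T$ follows since $\Ker T$ is $\sl_{2}$-invariant and the $\epsilon$-Hermitian form restricts non-degenerately to such invariant subspaces. An equivalent hands-on route is to peel off one $\sl_{2}$-isotypic block of $V'$ at a time, as in the standard construction of \cite{KP82}.

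\textbf{Uniqueness of the conjugacy class of $\gamma$.} If both $\Orb_{\gamma_{1},\gamma'}$ and $\Orb_{\gamma_{2},\gamma'}$ are non-empty, with $T_{i}\in\Orb_{\gamma_{i},\gamma'}$, then the partitions of $X_{i}=\varphi(T_{i})$ are both the same descent of the partition of $X'$. The $\epsilon$-Hermitian sign pattern of the admissible Young tableau of $X_{i}$ (cf.\ \eqref{eq:adm}) is also fixed by that of $X'$ together with the $\epsilon$-Hermitian structure on $V$, because the weight-one intertwiner $T_{i}$ identifies highest-weight pieces on the two sides with a sign dictated by the form on a single $\sl_{2}$-irreducible. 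Hence $X_{1}$ and $X_{2}$ lie in the same $G$-orbit, and so do $\gamma_{1}$ and $\gamma_{2}$ by Jacobson--Morozov.

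\textbf{Transitivity.} For $T_{1},T_{2}\in\Orb_{\gamma,\gamma'}$, both data $(V,V';T_{i},\gamma,\gamma')$ carry the same admissible $\epsilon$-Hermitian Young tableau; a Witt-type extension theorem for graded $\epsilon$-Hermitian $\sl_{2}$-modules then produces an isomorphism between the two structures. Such an isomorphism commutes with $\gamma$ and $\gamma'$ respectively and therefore lies in $M_{X}\times M'_{X'}$, giving the desired $(g,g')$ with $g'T_{1}g^{-1}=T_{2}$. Concretely, on each $\sl_{2}$-isotypic block the map $T_{i}$ is determined up to an automorphism of the multiplicity space preserving its form, and these block-wise automorphisms assemble into the required element of $M_{X}\times M'_{X'}$ via the decomposition \eqref{MX}. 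The principal obstacle is the existence step: one must simultaneously arrange the weight-one condition and the constraint $\varphi'(T)=X'$, which is not automatic from the naive choice $T_{0}$. Either the super Jacobson--Morozov argument or the inductive construction along the Young tableau requires careful sign bookkeeping to verify that the resulting tableau is admissible; once the correct shape of $\gamma$ is identified, uniqueness and transitivity reduce to standard classification arguments for graded Hermitian $\sl_{2}$-modules.
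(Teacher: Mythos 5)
Your overall structure (existence, uniqueness, transitivity) is the right one, and your uniqueness/transitivity sketches via admissible Young tableaux and a graded Witt-type extension are in the same spirit as the paper's reference to Proposition 5.3 and Lemma 5.7 of [GZ]. But the existence step, which is the only part the paper actually spells out, contains a genuine error.

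Your claim that ``$\Ker T$ is $\sl_{2}$-invariant and the $\epsilon$-Hermitian form restricts non-degenerately to such invariant subspaces'' is false for a weight-one map. If $T(V_k)\subset V'_{k+1}$ and $T^*T=X$, $TT^*=X'$, then $X'T=TX$, so $\Ker T$ is $X$-invariant, but there is no reason for it to be $Y$-invariant. Concretely, the smallest $\mathfrak{osp}(1|2)$-picture already gives a counterexample: take $V\oplus V'$ to be the $3$-dimensional irreducible $\mathfrak{osp}(1|2)$-module with even part $\rho_2$ (a $2$-dimensional symplectic $\sl_2$-module) and odd part $\rho_1$ (trivial). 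Then $T$ (the restriction of the odd raising operator to $V=\rho_2$) kills exactly the highest weight line $V_1$, which is an isotropic and hence \emph{degenerate} subspace, and is not $\sl_2$-stable. So the super Jacobson--Morozov construction, applied blindly starting from an arbitrary preimage $T_0$, does not produce a $T\in\Gen\Hom(V,V')$; the non-degeneracy of $\Ker T$ is precisely the property one must engineer, not a freebie.

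This is why the paper's proof is constructive: starting from any $S$ with $\varphi'(S)=X'$, on each $\sl_2$-irreducible block $\rho_{r+1}\subset V'$ one \emph{truncates}, setting $T^*$ equal to $S^*$ on $v'_{-r},\dots,v'_{r-2}$ and $T^*v'_r=0$. A short Gram-matrix computation (using $(S^*v'_j,S^*v'_k)=(X'v'_j,v'_k)$, which is anti-diagonal in the weight basis) shows that $\Im T^*$ is non-degenerate, hence so is $\Ker T=(\Im T^*)^\perp$. The point is that one must throw away the top weight piece of each block before the non-degeneracy kicks in; the unmodified $T_0$ will generally have degenerate kernel, and the $\mathfrak{osp}(1|2)$-grading does not repair it. Your ``hands-on route'' of peeling one isotypic block at a time is indeed what the paper does, but as written your proposal passes this off as an alternative to a flawed main argument rather than as the necessary one. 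If you want to keep the super JM framing, you would still need to carry out essentially the same truncation to modify the odd element before the non-degeneracy of $\Ker T$ can be asserted.
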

\begin{proof} We will only show the existence part, namely there is an $\sl_{2}$-triple $\gamma=\{X,H,Y\}\subset \g$ such that $\Orb_{\gamma,{\gamma}'}$ is non-empty.

Decompose \[
 V'=\bigoplus_{j=1}^{l} (V')^{\gamma',t_{j}},
\]
into $\sl_{2}$-isotypic components, where $(V')^{\gamma',t_{j}}$ is a direct sum of irreducible $t_{j}$-dimensional $\sl_{2}$-modules, and $t_{1}> t_{2}> \ldots > t_{l}>0$.
We fix one such irreducible module $\rF ^{r+1}$ of dimension $r+1$, as well as a basis $\{v'_{-r}, v'_{-(r-2)}, ..., v'_{r-2}, v'_{r}\}$ consisting of $H'$-weight vectors.

Write $X'=\varphi '(S)$, for some $S\in \Hom(V,{V}')$. Define the following set of vectors in $V$:
\[v_{-(r-1)}=S^*v'_{-r}, v_{-(r-3)}=S^*v'_{-(r-2)}, ..., v_{r-3}=S^*v'_{r-4}, v_{r-1}=S^*v'_{r-2}.\]
It is easy to see that $\{v_{-(r-1)}, v_{-(r-3)}, ..., v_{r-3}, v_{r-1}\}$ span a non-degenerate subspace of $V$, of dimension $r$.

Now define a new element $T\in \Hom(V,{V}')$ by setting
\[
\begin{aligned}
T^*=\begin{cases} S^*, \ \ \text{ on } v'_{-r}, v'_{-(r-2)}, ..., v'_{r-4}, v'_{r-2}, \\
           0, \ \ \text{ on } v'_{r}. \end{cases}
           \end{aligned}
           \]
Clearly the image of $T^*$ will be non-degenerate, and so is $\Ker (T)$. Putting together the above construction for all $(V')^{\gamma',t_{j}}$'s, the newly defined $T$ and $X:=\varphi (T)$
will have all the required properties.

The rest of proof follows the same line of argument as the proof of Proposition 5.3 and Lemma 5.7 of \cite{GZ}, which is the special case when $\Ker (T)=0$. \end{proof}

\begin{defn} We are in the setting of Lemma \ref{lem:descent}. We will say that $\gamma $ (resp. $\Orb $) is the generalized descent of $\gamma ' $ (resp. $\Orb '$), and we write
\[\Orb = \gDD_{V',V}(\Orb ').\]
When $T\in \Orb_{\gamma,{\gamma}'}$ is injective, we will refer to it as the descent case, and we call $\gamma $ (resp. $\Orb $) the descent of $\gamma ' $ (resp. $\Orb '$). We also write
\[\Orb = \DD_{V',V}(\Orb ').\]
\end{defn}

We will describe the generalized descent $\Orb ' \mapsto \Orb $ using the parametrization of nilpotent orbits by admissible Young tableaux.

Let the nilpotent orbit $\Orb' \subset \g'$ correspond to the admissible $\epsilon'$-Hermitian Young tableaux $(\mathbf{d}^{{\gamma'}},(V'_{\gamma'}(j), B_{\gamma'}(j))$, with
\[\mathbf{d}^{\gamma'}=[(t_{1}+1)^{i_{1}},\ldots,(t_{l}+1)^{i_{l}},2^{a},1^{s}], \quad t_l\geq 2,\]
and $a$ (resp. $s$) is the dimension of the $\epsilon$-Hermitian space $((V')^{\gamma',2}_{1},B^{{\gamma'},2}_{1})$
(resp. $\epsilon'$-Hermitian space $((V')^{\gamma',1}_{0},B^{{\gamma'},1}_{0})$.

If $\Orb'$ is in the image of the moment map, then we have an embedding of $\epsilon$-Hermitian spaces (via $T^*$):
\[
\{\oplus _{1\leq j\leq l}((V')^{\gamma',t_{j}+1}_{t_{j}},B^{{\gamma'},t_{j}+1}_{t_{j}})\otimes (\rF^{t_j},(\cdot,\cdot)_{t_j})\} \oplus ((V')^{\gamma',2}_1, B^{\gamma', 2}_1)\otimes (\rF^{1},(\cdot,\cdot)_{1})\hookrightarrow (V,B).\]
Denote by $(U,B_U)$ be the orthogonal complement of $\{\oplus _{1\leq j\leq l}((V')^{\gamma',t_{j}+1}_{t_{j}},B^{{\gamma'},t_{j}+1}_{t_{j}})\otimes (\rF^{t_j},(\cdot,\cdot)_{t_j})\}$ in $V$ with respect to the form $B$.
Then $(U,B_U)$ contains \[(U_1,B_{U_1}):= ((V')^{\gamma',2}_1, B^{\gamma', 2}_1)\otimes (\rF^{1},(\cdot,\cdot)_{1})\simeq ((V')^{\gamma',2}_1, B^{\gamma', 2}_1)\] as a non-degenerate $\epsilon$-Hermitian subspace.

The admissible $\epsilon$-Hermitian Young tableau $({\mathbf d}^{\gamma},(V_{\gamma}(j),B_{\gamma}(j))$ corresponding to $\Orb\subset \g$ is as follows.
\begin{itemize}
\item ${\mathbf d}^{\gamma}=[t_{1}^{i_{1}},\ldots,t_{l}^{i_{l}}, 1^{a+b}]$, where $\dim U=a+b$.
\item $(V^{\gamma,t_{j}}_{t_{j}-1},B^{\gamma,t_{j}}_{t_{j}-1})\cong ((V')^{\gamma',t_{j}+1}_{t_{j}},B^{{\gamma'},t_{j}+1}_{t_{j}})$, for all $j=1,\ldots,l$;
\end{itemize}
Or in descriptive words:
\begin{itemize}
\item The Young diagram of $\Orb$ is obtained by erasing the first column off the Young diagram
of $\Orb '$, and then adding some additional boxes (this is the $b$) in the first column.
\item After erasing the first column off the Young diagram of $\Orb '$, one keeps the forms on the spaces of multiplicities unchanged.
\end{itemize}

In the setting of the generalized descent, the two stabilizer groups $M_X$ and $M'_{X'}$ are closely related. We proceed to describe their relationship.

\begin{defn} Let $\gamma$, ${\gamma}'$ be as before, and $T\in \Orb_{\gamma,{\gamma}'}$. Define the following non-degenerate subspaces of $V$ and $V'$:
\[
V_{\gamma,{\gamma}'}:=\Ker (T) \qquad \text{and } \ V'_{\gamma, \gamma'}:=(V')^{\gamma',1}_{0} \ \ (\text{the space of $\gamma'$-invariants}).
\]
Let $L$ and $L'$ be the isometry groups of $V_{\gamma,{\gamma}'}$ and $V'_{\gamma, \gamma'}$, respectively.
\end{defn}

We define a group homomorphism
\begin{equation}
\label{eq:alp}
\alpha=\alpha_{T}:{M'}_{X'}\longrightarrow M_{X}
\end{equation}
by setting $\alpha _{T}(m')$ to be the identity map on $\Ker (T)$, and equal to $T^{-1}m'T$ on $(\Ker T)^{\perp}$, for $m'\in {M'}_{X'}$.

We have a direct product
\begin{equation}
\label{eq:stab'}
M'_{X'}=M_{X,X'}\times L', \quad \text{where} \quad
M_{X,X'}=\prod_{j=1}^{l} G((V')^{\gamma',t_{j}+1}_{t_{j}}) \times G(U_1).
\end{equation}
Similarly we have
\begin{equation}
\label{eq:stab}
M_{X} = \prod_{j=1}^{l} G(V^{\gamma,t_{j}}_{t_{j}-1}) \times G(U)\cong \prod_{j=1}^{l}G((V')^{\gamma',t_{j}+1}_{t_{j}}) \times G(U).
\end{equation}
Note that the latter contains $M_{X, X'}\times L$ as a symmetric subgroup, due to the fact that $U=U_1\oplus \Ker (T)$ (orthogonal direct sum) and so $G(U)\supset G(U_1)\times L$ as a symmetric subgroup. Under the above identification, $\alpha : {M'}_{X'}\rightarrow M_{X}$ is the identity homomorphism on the first factor of \eqref{eq:stab'}, and the trivial homomorphism on the second factor of  $\eqref{eq:stab'}$.

\subsection{Transition of generalized Whittaker models}
\label{subsec:Trans}

We are in the setting of Section \ref{sec:DPreview}: we are given $(G,G')\subset \Sp(W)$, a type I reductive dual pair, and the smooth oscillator representation $(\omega, \SY)$ of $\cover{\Sp}(W)$ associated to a fixed $\psi$. We are interested in knowing all generalized Whittaker models of $\Theta (\pi)$, where $\pi \in \Irr(\cover{G})$.

We state our result on the transition of generalized Whittaker models. This is an extension of the main result of \cite{GZ}, which is the descent case (i.e., $\Ker (T)=0$, in which case $L$ is trivial and $M_X=M_{X,X'}$) in the terminology of this article.

\begin{thm} {\em {([Gomez and Zhu])}}
\label{thm:whittaker}
Let $\pi \in \Irr (\cover{G})$.

\noindent (a) If $\Orb'$ is not in the image of the moment map $\varphi'$, then
\[\Wh_{\Orb'}(\Theta (\pi))=0.\]

\noindent (b) If $\Orb'$ is in the image of the moment map $\varphi'$, let $\Orb = \gDD_{V',V}(\Orb ')$ be its generalized descent.  We have (as in \eqref{eq:stab'} and \eqref{eq:stab})
\[
\begin{aligned}
M_{X}\supset M_{{X}, {X}'}\times L, \\
M'_{X'}=M_{{X}, {X}'}\times L', \\
\end{aligned}
\]
and $(L,L')$ forms a dual pair of the same type as $(G,G')$. Then for any $\tau '\in \Irr(\cover{L'})$, we have
\[\Wh_{\Orb',\tau '}(\Theta (\pi))\simeq \Wh_{\Orb, \Theta (\tau')^{\vee}}(\pi ^{\vee})\]
as $\cover{M_{X, X'}}$-modules.  Here $\Theta (\tau')$ is the full theta lift of $\tau '$ with respect to the dual pair $(L,L')$, and the symbol $\vee$ indicates the dual in the category of Casselman-Wallach representations.
\end{thm}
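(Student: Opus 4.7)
The plan is to reduce both sides of the asserted isomorphism to a single geometric object, namely an appropriate twisted Jacquet module of the smooth oscillator representation $(\omega,\SY)$. Using that $\SV\,\widehat\otimes\,\Theta(\pi)$ is the maximal $\pi$-isotypic quotient of $\SY$ and applying Schur--Frobenius reciprocity, the space $\Wh_{\Orb',\tau'}(\Theta(\pi))=\Hom_{\cover{L'}N'}(\Theta(\pi),\tau'\,\widehat\otimes\,\SS_{\gamma'})$ embeds as a $\cover G$-isotypic component of $\Hom_{\cover{L'}N'}(\SY,\SV\,\widehat\otimes\,\tau'\,\widehat\otimes\,\SS_{\gamma'})$, which is in turn described by $\cover G\times\cover{L'}$-equivariant maps out of the Jacquet module $\omega_{N',\chi_{\gamma'}}$. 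Thus everything reduces to understanding $\omega_{N',\chi_{\gamma'}}$ (and, for the right-hand side, the analogous $\omega_{N,\chi_\gamma}$) as a module under the stabilizers $\cover{M_X}$ and $\cover{M'_{X'}}$.

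For part (a), I would realize $\omega$ in a Schr\"odinger-type model on a Lagrangian in $W=\Hom(V,V')$. On this model the action of the unipotent subgroup $U'\subset N'$ is by characters of the form $\psi(\kappa(\varphi'(T),\cdot))$, so that $\omega_{U',\chi_{\gamma'}}$ is supported, as a space of distributional sections, on the fiber $\varphi'^{-1}(X')\subset\Hom(V,V')$. When $X'$ is not in the image of the moment map $\varphi'$ this fiber is empty, so $\omega_{N',\chi_{\gamma'}}=0$ and part (a) follows immediately.

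For part (b), the central technical step is to establish an isomorphism
$$\omega_{(N\times N',\,\chi_\gamma\,\otimes\,\chi_{\gamma'})}\;\simeq\;\SS_\gamma\,\widehat\otimes\,\omega_{L,L'}\,\widehat\otimes\,\SS_{\gamma'}$$
of $\cover{M_X}\times\cover{M'_{X'}}$-modules, where $\omega_{L,L'}$ is the smooth oscillator representation of the residual dual pair $(L,L')$ attached to the same $\psi$. The geometric backbone is Lemma \ref{lem:descent}: the common support of the two twisted Jacquet modules inside $\Hom(V,V')$ is exactly $\Orb_{\gamma,\gamma'}$, a single $M_X\times M'_{X'}$-orbit, and the transverse directions at any chosen $T\in\Orb_{\gamma,\gamma'}$ decompose orthogonally into two Heisenberg-type pieces yielding $\SS_\gamma$ and $\SS_{\gamma'}$ via Stone--von Neumann, together with the residual symplectic space $\Hom_\rD(\Ker T,V'_{\gamma,\gamma'})$ whose Schr\"odinger model is precisely $\omega_{L,L'}$. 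The main obstacle is tracking the metaplectic cocycles: one must verify that the Weil-representation structures on the three tensor factors patch compatibly along the inclusions $\cover{M_X}\supset\cover{M_{X,X'}}\times\cover L$ and $\cover{M'_{X'}}=\cover{M_{X,X'}}\times\cover{L'}$, so that the diagonal $\cover{M_{X,X'}}$-action is genuine on both sides and the covers $\cover L,\cover{L'}$ agree with those used to define $\omega_{L,L'}$. This generalizes the descent case of \cite{GZ}, in which $\Ker T=0$ and the middle factor $\omega_{L,L'}$ is absent.

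With this identification in hand, the proof concludes by Frobenius reciprocity together with Howe duality for $(L,L')$. Applying $\Hom_{\cover{L'}}(-,\tau')$ to the double-Jacquet identification collapses $\SS_{\gamma'}$ and extracts the $\tau'$-Howe quotient of $\omega_{L,L'}$, which by the Howe duality theorem for $(L,L')$ realizes $\Theta(\tau')$. Dualizing against $\pi^\vee$ on the $\cover G$-side in the Casselman--Wallach category and combining with the remaining $(N,\chi_\gamma)$-twist identifies the resulting space with $\Hom_{\cover L\cover N}(\pi^\vee,\Theta(\tau')^\vee\,\widehat\otimes\,\SS_\gamma)=\Wh_{\Orb,\Theta(\tau')^\vee}(\pi^\vee)$. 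A direct check that the whole chain of isomorphisms is $\cover{M_{X,X'}}$-equivariant then completes the proof.
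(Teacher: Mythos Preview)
Your proposal captures the right conceptual picture --- reduce to a twisted Jacquet module of the oscillator representation, identify its structure geometrically via the moment map fibers, then apply Frobenius reciprocity --- but there are two genuine gaps that separate it from a proof.

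\textbf{Part (a).} The assertion that in a Schr\"odinger model ``the action of the unipotent subgroup $U'\subset N'$ is by characters of the form $\psi(\kappa(\varphi'(T),\cdot))$'' is not correct as stated. In any fixed polarization of $W$, only those unipotent elements that preserve the chosen Lagrangian act by multiplication by a character; the rest involve partial Fourier transforms. There is no single model in which all of $U'$ acts this simply. This is exactly why the paper's argument proceeds by induction on the grading index $r$: at each step one passes to a mixed model adapted to the decomposition $V'_{(r+1)}=V'_{r+1}\oplus V'_{-r-1}\oplus V'_{(r)}$, uses the (derived) action of only the \emph{center} of $\u'_{r+1}$ to cut down the support, and checks the two non-emptiness conditions (A1), (A2) before peeling off the next layer (Lemma~\ref{lem:ind}). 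Part (a) then falls out because failure of (A1) or (A2) at some stage forces the covariant space to vanish, while their validity for all $r$ exhibits an explicit $T$ with $\varphi'(T)=X'$.

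\textbf{Part (b).} Your proposed isomorphism for the \emph{double} Jacquet module $\omega_{(N\times N',\chi_\gamma\otimes\chi_{\gamma'})}$ is an $\cover{M_X}\times\cover{M'_{X'}}$-statement: it has already thrown away the $\cover G$-action. But the theorem requires extracting the $\pi$-isotypic component on the $\cover G$-side, so you cannot ``dualize against $\pi^\vee$ on the $\cover G$-side'' after having taken the $N$-coinvariants. The paper avoids this by computing only the \emph{one-sided} coinvariants $\SY_{U',\chi_{\gamma'}}$ and identifying it (Proposition~\ref{prop:mainproposition}) with an \emph{induced} representation
\[
\SY_{U',\chi_{\gamma'}}\ \simeq\ \Sch\bigl(\cover{L}N\backslash\cover{G};\ \SS_{\check\gamma}\otimes\SS_{\gamma'}\otimes\SY_{\gamma,\gamma'}\bigr)
\]
as a $\cover{G}\times\cover{M'_{X'}}N'$-module. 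The $\cover G$-structure is retained, and it is precisely this induced form that makes the final Frobenius step (passing from the $\pi$-isotypic quotient to $\Hom_{\cover LN}(\pi^\vee,\cdots)$) work. Your ``transverse directions decompose orthogonally into two Heisenberg pieces plus a residual symplectic space'' is morally the content of the paper's Lemma~3.14 at the last inductive step, but that lemma sits at the end of the induction, not in place of it; the support statement on $\Orb_{\gamma,\gamma'}$ also requires the distributional machinery (transverse order zero, the $\Sch$-spaces of \cite{GZ}) to rule out contributions from the boundary where $\Ker T$ degenerates.
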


Let $\gamma\subset \g$ and ${\gamma}'\subset {\g}'$ be two $\sl_{2}$-triples of type $\Orb$ and $\Orb '$, where $\Orb = \gDD_{V',V}(\Orb ')$. We may assume that
\begin{equation}
\label{eq:VV'}
V=\bigoplus_{k=-r}^{r} V_{k} \qquad \mbox{and} \qquad {V'}=\bigoplus_{k=-r-1}^{r+1} {V'}_{k},
\end{equation}
for some $r$. Write $T=\oplus _{k=-r}^{r}T_k$, where $T_k=T|_{V_k}$. The following diagram is instructive:
\begin{equation}
\label{eq:figT}
\begin{array}{c}   V_{-r}   \oplus    V_{-r+1}    \oplus    \cdots    \oplus    V_{r-1}   \oplus   V_{r}   \\
  \hspace{45pt} \searrow \hspace{-3pt} {\scriptstyle T_{-r}}  \hspace{10pt}  \searrow \hspace{-3pt} {\scriptstyle T_{-r+1}} \hspace{5pt}  \cdots\hspace{5pt}    \searrow \hspace{-3pt} {\scriptstyle T_{r-1}} \hspace{-5pt}   \searrow \hspace{-3pt} {\scriptstyle T_{r}}    \\
{V'}_{-r-1}    \oplus     {V'}_{-r}     \oplus     {V'}_{-r+1}    \oplus    \cdots    \oplus    {V'}_{r-1}   \oplus   {V'}_{r}    \oplus   {V'}_{r+1}.
\end{array}
\end{equation}
We have $\Ker (T)=\Ker (T_0)\subseteq V_0$, and $T_k$ is injective for $k\ne 0$, by our non-degeneracy assumption on $\Ker (T)$.

Recall that $(G,G')\subseteq \Sp (W)$ is a type I reductive dual pair, and $(\omega , \SY )$ is the smooth oscillator representation of $\cover{\Sp}(W)$ associated to a non-trivial unitary character $\psi$ of $\rF$, which we fix. Denote by $\SY_{{U'},\chi_{{\gamma}'}}$ the space of $({U'},\chi_{{\gamma}'})$-covariants of the smooth oscillator representation $(\omega , \SY )$.

The key technical result to derive Theorem \ref{thm:whittaker} is the following

\begin{prop}\label{prop:mainproposition}
 Let $\gamma\subset \g$ and ${\gamma}'\subset {\g}'$ be two $\sl_{2}$-triples of type $\Orb$ and $\Orb '$, where $\Orb = \gDD_{V',V}(\Orb ')$. Then, given any $T\in \Orb_{\gamma,{\gamma}'}$, there exists a $ \cover{G}\times \cover{M'_{X'}}{N'}$-intertwining isomorphism
\begin{equation*}
\Psi_{T}:\SY_{{U'},\chi_{{\gamma}'}}\longrightarrow \Sch(\cover{L}N\backslash \cover{G};\SS_{\check{\gamma}}\otimes \SS_{{\gamma}'} \otimes \SY_{\gamma,{\gamma}'}),
\end{equation*}
where $\check{\gamma} =\{-X,H,-Y\}$, and $\SY_{\gamma,{\gamma}'}$ is the smooth oscillator representation associated to the symplectic subspace $W_{\gamma,\gamma'}:=\Hom (V_{\gamma, \gamma '},
V'_{\gamma, \gamma '})$ of $W$.
 \end{prop}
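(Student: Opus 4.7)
My strategy is to extend the descent-case proof in \cite{GZ} by isolating the ``kernel'' contribution $W_{\gamma,\gamma'}=\Hom(V_{\gamma,\gamma'},V'_{\gamma,\gamma'})$ and handling it separately from the rest of $W$. First I would realize $(\omega,\SY)$ in a Schr\"odinger model on a Lagrangian of $W=\Hom(V,V')$ adapted to the bigrading coming from the $H$-eigenspace decomposition $V=\bigoplus V_k$ and the $H'$-eigenspace decomposition $V'=\bigoplus V'_k$. In this model, the chosen $T\in\Orb_{\gamma,\gamma'}$ specifies a distinguished point at which both moment-map conditions $\varphi(T)=X$ and $\varphi'(T)=X'$ hold and at which the $\gamma$- and $\gamma'$-gradings are intertwined as in \eqref{eq:figT}.

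Next I would compute the $(U',\chi_{\gamma'})$-covariants. The group $U'$ acts on $W$ by left multiplication and on $\SY$ by translations combined with quadratic characters. A Heisenberg / Stone--von Neumann type reduction, exploiting the symplectic structure on $\g'_{-1}$ and the non-degeneracy of $\chi_{\gamma'}$, shows that the $(U',\chi_{\gamma'})$-covariants are supported on the ``moment surface'' of admissible elements $S\in W$ with $\varphi'(S)=X'$ and with gradings compatible with $\gamma'$. I would then parameterize this surface near $T$ using the decomposition $V=\Ker(T)\oplus(\Ker T)^{\perp}$: a nearby admissible $S$ decomposes as $S=gT+S_0$, where $g\in G$ varies modulo a stabilizer which, after invoking \eqref{MX} and \eqref{eq:stab} together with the defining $\sl_2$-structure, is identified with $\cover{L}N$; and $S_0\in W_{\gamma,\gamma'}$ is free. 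This yields the claimed factorization into $\Sch(\cover{L}N\backslash\cover{G})$, the Heisenberg representations $\SS_{\check{\gamma}}$ and $\SS_{\gamma'}$ attached to the gradings at $X$ and $X'$, and the residual oscillator $\SY_{\gamma,\gamma'}$ for the auxiliary dual pair $(L,L')$ acting on $W_{\gamma,\gamma'}$.

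Finally I would verify equivariance. The $\cover{G}$-action becomes right translation on the Schwartz base; $N'$ acts via $\chi_{\gamma'}$ and the Heisenberg model of $\SS_{\gamma'}$; and the product decomposition $\cover{M'_{X'}}=\cover{M_{X,X'}}\times\cover{L'}$ from \eqref{eq:stab'} dictates the remaining action, with $\cover{M_{X,X'}}$ acting on $\SS_{\check{\gamma}}$ through the embedding $\alpha_T:M'_{X'}\to M_X$ of \eqref{eq:alp} and $\cover{L'}$ acting on $\SY_{\gamma,\gamma'}$ through the dual pair $(L,L')$. The main technical obstacle I anticipate is tracking the metaplectic cocycles so that the tensor factorization is genuinely $\cover{G}\times\cover{M'_{X'}}N'$-equivariant rather than merely projectively so; this will require an explicit cocycle computation in the spirit of \cite{GZ}, but the new factor $\SY_{\gamma,\gamma'}$ should fit in cleanly because $(L,L')$ sits inside $(G,G')$ and inherits a compatible splitting of the metaplectic cover.
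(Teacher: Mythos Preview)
The paper does not proceed by a single global reduction as you propose; it argues by induction on $r$ (the top $H$-weight in $V=\bigoplus_{k=-r}^r V_k$), using the mixed-model identification $\SY_{(r),(r+1)}\cong[\SS_{(r),r+1}\otimes\SS_{-r,(r)}]\otimes\SY_{(r-1),(r)}$ to peel off one layer $(U'_{r+1},\chi'_{r+1})$ at a time. Each inductive step has two parts: first, derived actions of central elements of $\u'_{r+1}$ force $(U'_{r+1},\chi'_{r+1})$-quasi-invariant distributions to live (with transverse order zero, in the sense of \cite{KV96}) on the generic null-map locus $\Hom_{\GNM}(V_{(r)},V'_{r+1})$; second, after evaluating at a chosen $T_r$, the remaining elements of $\u'_{r+1}$ localize further to an affine slice $T_{-r}+\Hom(V_{-r},V'_{-r})$. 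This is Lemma~\ref{lem:ind}, which reduces the problem to the smaller pair $(G_{(r-1)},G'_{(r)})$. Only at the end of the induction does the factorization $\SS_{\check\gamma}\otimes\SS_{\gamma'}\otimes\SY_{\gamma,\gamma'}$ emerge, via the symplectic embedding $J_T\colon -\g_{-1}\oplus\g'_{-1}\hookrightarrow W_0$ with $(\Im J_T)^\perp = W_{\gamma,\gamma'}$ (the replacement of \cite[Lemma~5.8]{GZ} stated just after Lemma~\ref{lem:ind}).

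Your proposal has a genuine gap at the step ``a Heisenberg / Stone--von Neumann type reduction \ldots\ shows that the $(U',\chi_{\gamma'})$-covariants are supported on the moment surface.'' The group $U'$ is a multi-step unipotent group, not a Heisenberg group, and its action on any Schr\"odinger model of $\SY$ mixes translations with quadratic phases in a way that does not yield a one-shot support constraint $\varphi'(S)=X'$; the paper's induction is precisely the mechanism that makes this tractable, isolating at each stage a piece of $U'$ whose action is simple enough to force a support condition. Your parameterization ``$S=gT+S_0$ with $S_0\in W_{\gamma,\gamma'}$ free'' is also not correct: addition does not preserve the fibre $\varphi'^{-1}(X')$, and the factor $\SY_{\gamma,\gamma'}$ does not arise from a linear splitting of that fibre but from the symplectic orthocomplement of $\Im J_T$ inside $W_0$ after the induction has been carried out. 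Without the inductive scaffolding (or an explicit substitute for it), the passage from $\SY_{U',\chi_{\gamma'}}$ to the claimed tensor product is unjustified.
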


\begin{rmk} In the above proposition and in the sequel, the unexplained notation $\Sch$ denotes certain space of rapidly decreasing functions defined by an appropriate collection of semi-norms. See \cite[Section 4.3]{GZ}. We also refer to \cite[Proposition 6.3]{GZ} for other unexplained actions such as the action of $\cover{M'_{X'}}{N'}$ on the right hand of the isomorphism $\Psi_{T}$.
\end{rmk}

In the descent case, $V_{\gamma,\gamma'}=0$, $L$ is the trivial group, $M_X=M_{X,X'}$, and the above proposition is just \cite[Proposition 6.5]{GZ}. The proof in the general case is along the same line, and it is by induction on $r$. We will outline the main ingredients of the inductive step in the next subsection.

\subsection{The inductive step}
To facilitate the discussion, we introduce some notations. For $l\leq r$ and $m\leq r+1$, set
\[
V_{(l)}=\bigoplus_{k=-l}^{l}V_{k}, \qquad \mbox{and} \qquad {V'}_{(m)}=\bigoplus_{k=-m}^{m}{V'}_{k}.
\]
Clearly both $V_{(l)}$ and ${V'}_{(m)}$ are non-degenerate. Let $G_{(l)}:=G(V_{(l)})$ (the isometry group of $V_{(l)}$), which we view as a subgroup of $G$ in the obvious way.
We define ${G'}_{(m)}$ similarly.
Let $(\omega_{(l),(m)},\SY_{(l),(m)})$ be the smooth oscillator
representation associated to the dual pair
$(G_{(l)},{G'}_{(m)})$ and the character $\psi$ of $\rF$. Let the symbol $\SS$ stand for the Schwartz space (of a vector space). Set
$\SS_{(l),m}=\SS(\Hom(V_{(l)},{V'}_{m}))$, $\SS_{l,(m)}=\SS(\Hom(V_{l},{V'}_{(m)}))$ and
$\SS_{l,m}=\SS(\Hom(V_{l},{V'}_{m}))$. Note that $\SY_{(l),(m)}$ can be identified with the
Schwartz space of a Lagrangian subspace of $\Hom(V_{(l)},{V'}_{(m)})$, through the Schrodinger model.

Under these notations, we have
\[V=V_{(r)}, \qquad \mbox{and} \qquad V'={V'}_{(r+1)};\]
\[G=G_{(r)}, \qquad \mbox{and} \qquad G'=G'_{(r+1)};\]
\[(\omega ,\SY)=(\omega_{(r),(r+1)},\SY_{(r),(r+1)}).\]

Let ${P'}_{m}$ be the stabilizer of ${V'}_{m}$ in ${G'}_{(m)}$. We have ${P'}_{m}={M'}_{(m)}{N'}_{m}$, where ${N'}_{m}$ is the
unipotent radical of ${P'}_{m}$, ${M'}_{(m)}={M'}_{m}\times {G'}_{(m-1)}$, and ${M'}_{m}\cong \GL ({V'}_{m})$.
Set
\begin{equation*}
N'_{(m)}=N'\cap G_{(m)}, \qquad U'_{m}=U'\cap N'_{m}, \qquad \mbox{and} \qquad U'_{(m)}=U'\cap N'_{(m)}.
\end{equation*}

Recall that $\chi'=:\chi_{\gamma'}$ is the character of $U'$ associated to the $\sl_{2}$-triple $\gamma'$, as in \eqref{defchi}. We set
\[\chi'_{m}=\chi'|_{U'_{m}}, \ \ \text{ and } \ \ \chi'_{(m)}=\chi'|_{U_{(m)}}.\]
Using these notations, we then have
\begin{equation}
\label{GandC}
U'=U'_{(r+1)}=U'_{(r)}U'_{r+1}, \ \ \text{ and } \ \ \chi '=\chi '_{(r+1)}=\chi '_{(r)}\chi '_{r+1}.
\end{equation}

Instead of working with the covariant space $\SY_{{U'},\chi' }$, it is more convenient to work dually with the space of quasi-invariant distributions $(\SY')^{{U'},{\chi'}}$.  It is also convenient to adopt the notion of $\SV$-distributions on a manifold, where $\SV$ is a Fr\'echet space. We refer the reader to \cite{KV96} for a general introduction on such distributions.

To carry out the induction, we will apply the decomposition
\begin{equation}
\label{decom1}
{V'}_{(r+1)} = {V'}_{r+1}\oplus {V'}_{-r-1}\oplus {V'}_{(r)},
\end{equation}
followed by the decomposition
\begin{equation}
\label{decom2}
V_{(r)}=V_{-r}\oplus V_{r}\oplus V_{(r-1)}.
\end{equation}

This gives rise to the decomposition
\begin{eqnarray*}
&  &\Hom(V_{(r)},{V'}_{(r+1)}) = \Hom(V_{(r)},{V'}_{r+1})\oplus \Hom(V_{(r)},{V'}_{-r-1})\oplus \Hom(V_{(r)},{V'}_{(r)})\\
&  & = (\Hom(V_{(r)},{V'}_{r+1})\oplus \Hom(V_{-r},{V'}_{(r)}))\oplus (\Hom(V_{(r)},{V'}_{-r-1}) \oplus \Hom(V_{r},{V'}_{(r)}))\\
&  & \ \ \ {} {}\oplus \Hom(V_{(r-1)},{V'}_{(r)}).
\end{eqnarray*}
Note that $\Hom(V_{(r)},{V'}_{r+1})\oplus \Hom(V_{-r},{V'}_{(r)})$, $\Hom(V_{(r)},{V'}_{-r-1}) \oplus \Hom(V_{r},{V'}_{(r)})$ are totally isotropic, complementary subspaces, and thus via the mixed models, we will have the identification
\begin{equation}
\label{eq:polarization}
\begin{aligned}
\SY_{(r),(r+1)}&\cong \SS_{(r),r+1}\otimes \SY_{(r),(r)}\\
&\cong [\SS_{(r),r+1}\otimes \SS_{-r,(r)}]\otimes \SY_{(r-1),(r)}, \ \ \ \ r>0.
\end{aligned}
\end{equation}
This identification allows us to do the induction with respect to $r$.

There are two steps in each induction, corresponding to the two isomorphisms in \eqref{eq:polarization}.

\vsp
Our goal is to understand the space $(\SY_{(r),(r+1)}')^{{U'}_{(r+1)},{\chi'}_{(r+1)}}$. In view of \eqref{GandC}, we start with a (tempered) distribution $\lambda \in (\SY_{(r),(r+1)}')^{{U'}_{r+1},{\chi'}_{r+1}}$.

\noindent {\bf Step 1}: We use the explicit realization of $\SY_{(r),(r+1)}$ given by the first isomorphism in \eqref{eq:polarization}. We identify $\lambda$ with a
$\SY_{(r),(r)}'$-valued distribution on $\Hom(V_{(r)},{V'}_{r+1})$, as in \cite{KV96}.
By using (derived) action of elements in the center of $\u'_{r+1}$ and the fact that $\Im (X')$ contains $V'_{r+1}$, one shows that $\lambda$ may be identified with a $\SY_{(r),(r)}'$-valued distribution that \emph{lives on} $\Hom_{\GNM}(V_{(r)},{V'}_{r+1})$ (i.e., having transverse order zero at all points of $\Hom_{\GNM}(V_{(r)},{V'}_{r+1})$), where
\begin{equation*}
\begin{aligned}
&\Hom_{\GNM}(V_{(r)},{V'}_{r+1})= \\
&\{T\in \Hom(V_{(r)},{V'}_{r+1})\, | \, \mbox{$TT^{\ast}=0$ and $T$
has maximal rank $\dim {V'}_{r+1}$} \}.
\end{aligned}
\end{equation*}
(Here the subscript $\GNM$ stands for generic null mappings.)

\begin{equation*}
\text{Assume that $\Hom_{\GNM}(V_{(r)},{V'}_{r+1})$ is non-empty.}\tag{A1}
\end{equation*}
Then $\Hom_{\GNM}(V_{(r)},{V'}_{r+1})$ is a single orbit under the action of $G=G_{(r)}$. We pick one representative of $\Hom_{\GNM}(V_{(r)},{V'}_{r+1})$
called $T_r$, and by permuting under the $G$-action if necessary, we may pick a $T_r$ with the following property: there exists a totally isotropic subspace $V_{r}$ of $V=V(r)$ having the same dimension as ${V'}_{r+1}$, and
 \begin{equation}
 \label{eq:Tr}
 T_r|_{V_{(r-1)}\oplus V_{-r}}=0, \ \text{ and } \ T_r: V_r\rightarrow V'_{r+1} \, \text{is a linear isomorphism}.
 \end{equation}
 Here $V_{-r}$ is the isotropic subspace dual to $V_{r}$, and $V_{(r-1)} = (V_r\oplus V_{-r})^{\perp}$.

\begin{rmk} Note that in Part (a) of Theorem \ref{thm:whittaker}, we are only given the nilpotent orbit $\Orb'$, and not $\Orb$. Thus we have the decomposition \eqref{decom1}, but not the decomposition \eqref{decom2}. Nevertheless, the just concluded discussion allows us, under the hypothesis (A1), to find $V_r$, $V_{-r}$, $V_{(r-1)}$ so that the decomposition \eqref{decom2} holds. Of course these spaces ($V_r$, $V_{-r}$, $V_{(r-1)}$) no longer carry any meanings in terms of eigenvalues of the neutral element of an $\sl_2$-triple. With the decomposition \eqref{decom2} in place, we then continue the induction.
\end{rmk}

Let $P_{r}$ be the stabilizer of $V_{-r}$
in $G$. Then $P_{r}=M_{(r)}N_{r}$, where $N_{r}$ is the
unipotent radical of $P_{r}$, $M_{(r)}= M_{r}\times G_{(r-1)}$, and $M_{r}\cong \GL(V_{-r})$. Note that $N_{r}G_{(r-1)}$ is the stabilizer of $T_r$.

{\bf Step 2}: Since we have the decomposition $V=V_{-r}\oplus V_{(r-1)}\oplus V_r$, we could now use the explicit realization of $\SY_{(r),(r+1)}$ given by the second isomorphism in \eqref{eq:polarization}. After evaluating at $T_r$, and by using (derived) action of other elements of $\u'_{r+1}$, one finds that $\lambda$ may be identified with a distribution
that \emph{lives on} $G\times \SA$, where $\SA$ is the affine space given by the linear system:
\[
\{S \in \Hom(V_{-r},{V'}_{(r)})\, | \,
\Tr ({R'}T_{r}S^{\ast})=\Tr({R'}{X'}),\, \forall \, {R'}\in \Hom({V'}_{r+1},{V'}_{(r-1)}\oplus
{V'}_{-r})\}.
\]

\begin{rmk} The above affine space $\SA$ appears incorrectly in line 18, page 842 of \cite{GZ}: $\Tr$ should be inserted on both sides.
\end{rmk}

\begin{equation*}
\text{Assume that $\SA$ is non-empty}. \tag{A2}
\end{equation*}
Observe that the corresponding homogeneous system of $\SA$ has $\Hom(V_{-r},{V'}_{-r})$ as the general solution.  Due to the way we have arranged $T_r$, we may choose a particular solution $T_{-r}$ or equivalently $T_{-r}^{\ast}$  so that
\begin{equation}
\label{eq:T-r}
T_{-r}^{\ast}\in \Hom (V'_{r-1}, V_r) \text { and } T_r T_{-r}^{\ast}=X' \text { on } V'_{r-1}.
\end{equation}
Figuratively, we have completed a triangle:
\[
\begin{array}{c}   V_{r}   \\
\hspace{-1pt} {\scriptstyle T^{\ast}_{-r}} \nearrow  \searrow \hspace{1pt} {\scriptstyle T_{r}}\\
{V'}_{r-1}   \xrightarrow{X'} V'_{r+1}
\end{array}
\]
The full solution of the linear system is then $T_{-r}+\Hom(V_{-r},{V'}_{-r})$, and $\lambda$ is identified as distribution that \emph{lives on} $G\times [T_{-r}+\Hom(V_{-r},{V'}_{-r})]$.

Given a function $f\in \SY_{(r),(r+1)}\cong [\SS_{(r),r+1}\otimes
\SS_{-r,(r)}]\otimes \SY_{(r-1),(r)}$, define a new function $f_{r}\in
C^{\infty}(G;\SS_{-r,-r}\otimes \SY_{(r-1),(r)})$ by
\[
f_{r}(g)(S)=[\omega_{(r),(r+1)}(g)f](T_{r},T_{-r}+S),\qquad g\in G, S\in\Hom(V_{-r},{V'}_{-r}).
\]

The final result of the inductive step, which will relate covariants of $\SY_{(r),(r+1)}$ with those of $\SY_{(r-1),(r)}$, is the following

\begin{lem}
\label{lem:ind}
\noindent (a) For $(\SY_{(r),(r+1)})_{{U'}_{r+1},{\chi'}_{r+1}}$ to be non-zero, both (A1) and (A2) must be satisfied.

 \noindent (b) Assume both (A1) and (A2). For $r>0$, the map $f\mapsto f_{r}$ induces a $G$-intertwining isomorphism
\[
\Psi_{r}:(\SY_{(r),(r+1)})_{{U'}_{r+1},{\chi'}_{r+1}} \longrightarrow
\Sch(N_{r} G_{(r-1)}\backslash G;\SS_{-r,-r}\otimes\SY_{(r-1),(r)}),
\]
where the action of $N_{r} G_{(r-1)}$ on $\SS_{-r,-r}\otimes\SY_{(r-1),(r)}$ is given by equations (6.37)-(6.39) in \cite{GZ}.
\end{lem}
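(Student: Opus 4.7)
The plan is to adapt the proof of \cite[Proposition 6.5]{GZ} (which handles the descent case $\Ker(T)=0$, equivalently $L$ trivial and $V_{(r-1)}$ coming from a fixed $\sl_2$-triple) to the general case. By duality, the task is to describe the space of $(U'_{r+1},\chi'_{r+1})$-equivariant tempered distributions on $\SY_{(r),(r+1)}$, which we realize through the polarization \eqref{eq:polarization} as $\SY_{(r-1),(r)}'$-valued distributions on $\Hom(V_{(r)},V'_{r+1})\oplus \Hom(V_{-r},V'_{(r)})$, in the sense of $\SV$-valued distributions reviewed in \cite{KV96}.

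The first block of the argument is Step~1 of the text. The center $\mathfrak{z}(\u'_{r+1})$ can be identified with $\Hom(V'_{-r-1},V'_{r+1})$, and under the oscillator representation it acts on $\SS_{(r),r+1}$ by translation composed with a unitary character whose frequency at $T$ is a linear function of $TT^*$. Combining this with the $\chi'_{r+1}$-equivariance along the full group $U'_{r+1}$ forces the support of any covariant distribution to lie in the locus $\{TT^*=0\}$ and its transverse order there to be zero; the remaining (non-central) generators of $\u'_{r+1}$, together with the hypothesis that $X'\in\Im(\varphi')$, then force the rank of $T$ to be maximal equal to $\dim V'_{r+1}$. This shows the distribution lives on $\Hom_{\GNM}(V_{(r)},V'_{r+1})$; if this set is empty, the covariant space vanishes, establishing the first half of (a). Otherwise, $G=G_{(r)}$ acts transitively on this set, and we may evaluate at the chosen representative $T_r$ of \eqref{eq:Tr}, whose stabilizer in $G$ is exactly $N_r G_{(r-1)}$.

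The second block is Step~2. After evaluating at $T_r$, we examine the behavior of the distribution in the variable $S\in\Hom(V_{-r},V'_{(r)})$. The remaining relations coming from the $(U'_{r+1},\chi'_{r+1})$-equivariance, applied to the generators of $\u'_{r+1}$ pairing $V'_{r+1}$ with $V'_{-(r-1)}\oplus\cdots\oplus V'_{r-1}\oplus V'_{-r}$, translate via the oscillator action into the linear system cutting out the affine space $\SA$. If $\SA=\emptyset$, we again get zero, finishing (a); otherwise, using the particular solution $T_{-r}$ of \eqref{eq:T-r}, the support reduces to $T_{-r}+\Hom(V_{-r},V'_{-r})$, and the Schrödinger model absorbs the free direction $\Hom(V_{-r},V'_{-r})$ into the Schwartz factor $\SS_{-r,-r}$. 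What remains is a $\SS_{-r,-r}\otimes\SY_{(r-1),(r)}$-valued function on $G$ that transforms under $N_rG_{(r-1)}$ on the left by the formulas in \cite[eqs.~(6.37)--(6.39)]{GZ}. The smooth Frobenius/Schwartz-induction framework of \cite[Section 4.3]{GZ} then packages this data as a section in $\Sch(N_rG_{(r-1)}\backslash G;\SS_{-r,-r}\otimes\SY_{(r-1),(r)})$, and the map $f\mapsto f_r$ gives the desired $G$-equivariant isomorphism $\Psi_r$ of (b).

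The main obstacle, as in \cite{GZ}, is the distributional bookkeeping in Step~1 (establishing transverse order zero on a non-smooth constraint locus) and the verification that the cocycles and character constants coming from the metaplectic cover behave correctly upon restriction to the stabilizer $N_rG_{(r-1)}$. The essential novelty compared to \cite{GZ} is that $V_{(r-1)}$ no longer arises as a direct sum of $H$-weight subspaces of a fixed $\sl_2$-triple; however, by the remark following \eqref{eq:Tr}, the representative $T_r$ can always be chosen so that $V_{(r-1)}$ is a non-degenerate orthogonal complement of $V_r\oplus V_{-r}$, and this is the only structural property used in Steps~1 and~2. With this observation in place, every subsequent estimate, Frobenius reciprocity, and identification of actions goes through verbatim from the descent case.
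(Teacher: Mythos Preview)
Your proposal follows essentially the same two-step strategy as the paper, whose discussion of Steps~1 and~2 in Section~3.4 \emph{is} the proof outline for this lemma (no separate formal proof is given). One correction: in Step~1 you invoke ``the hypothesis that $X'\in\Im(\varphi')$'' to force maximal rank of $T$, but this is not a hypothesis of part~(a)---indeed it is the eventual \emph{conclusion} once (A1) and (A2) are established. The input the paper actually uses at this stage is the $\sl_2$-theoretic fact that $\Im(X')\supseteq V'_{r+1}$, which makes $\chi'_{r+1}$ sufficiently non-degenerate on (the center of) $\u'_{r+1}$ to force both $TT^{\ast}=0$ and the maximal rank condition. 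With that fix, your outline matches the paper's.
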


\subsection{Sketch of proof of Theorem \ref{thm:whittaker}} Continuing the induction provided by Lemma \ref{lem:ind}, Part (a) of Theorem \ref{thm:whittaker} follows. This is because the nonvanishing of $(\SY_{(r),(r+1)})_{{U'}_{(r+1)},{\chi'}_{(r+1)}}$ requires both (A1) and (A2) to be satisfied for all $r$. We may thus find $T_r$ and $T_{-r}$ as in equations \eqref{eq:Tr} and \eqref{eq:T-r}.
It is then clear that the moment map $\varphi '$ will send $T=:\oplus _{k=-r}^{r}T_k$ to $X'$, and so $\Orb'$ is in the image of the moment map $\varphi'$.

For Part (b) of Theorem \ref{thm:whittaker}, we proceed with the induction as in the descent case. For details, see Sections 6.4 and 6.5 of \cite{GZ}. At the end of the induction, we will need the following lemma, which is a replacement of \cite[Lemma 5.8]{GZ} in the descent case. Together with the inductive steps, it implies Proposition \ref{prop:mainproposition}, which in turn implies Part (b) of Theorem \ref{thm:whittaker}.

Recall $\g_{-1}$ is a symplectic space whose symplectic structure is defined through an
$\Ad\,G$-invariant non-degenerate $\rF$-bilinear form $\kappa$ on $\g$, as in \eqref{defsymg-1}.
We now fix the bilinear form $\kappa$ as $\frac{1}{2}\Tr(T^{\ast}S)$ (for $T, S \in \g$), and likewise for ${\kappa'}$.

\begin{lem} Let ${W}_{0}$ be the symplectic subspace of $W$ defined by
\begin{equation*}
{W}_{0}=\bigoplus_{k=-r}^{r}
\Hom(V_{k},{V}'_{k})\subset \Hom(V,{V}').
\end{equation*}
 Given $T\in \Orb_{\gamma,{\gamma'}}$, the map
\[
\begin{array}{rcl}
J_{T}: \ \ -\g_{-1}\oplus {\g'}_{-1} & \longrightarrow &  {W}_{0}, \\
(R,{R'})  & \mapsto &  TR+{R'}T
\end{array}
\]
is a symplectic imbedding, with $(\Im J_{T})^{\perp}=W_{\gamma,{\gamma}'}$.
\end{lem}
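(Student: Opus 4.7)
The plan is to prove the lemma in four parts: (i) the image of $J_T$ lies in $W_0$; (ii) $J_T$ is symplectic, hence injective; (iii) $W_{\gamma,\gamma'} \subseteq (\Im J_T)^\perp$; and (iv) equality by a dimension count inside $W_0$. Part (i) is an immediate weight count: $R \in \g_{-1}$ satisfies $R(V_k)\subset V_{k-1}$, $R' \in {\g'}_{-1}$ satisfies $R'(V'_k)\subset V'_{k-1}$, and the lifting condition gives $T(V_k)\subset V'_{k+1}$, so both $TR$ and $R'T$ send $V_k$ into $V'_k$. For part (ii), I would expand
$\langle J_T(R_1, R_1'), J_T(R_2, R_2')\rangle_W = \Tr_{D/F}\bigl((-R_1T^* - T^*R_1')(TR_2 + R_2'T)\bigr)$
using $T^*T = X$, $TT^* = X'$ and the anti-self-adjointness relations $R_i^* = -R_i$, ${R_i'}^* = -R_i'$. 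The diagonal terms reduce to $-\Tr(R_1 X R_2)$ and $-\Tr(X' R_1' R_2')$, which (via cyclic trace together with $\kappa_{-1}(R_1,R_2)=\kappa(X,[R_1,R_2])$ and $\kappa'_{-1}(R_1',R_2')=\kappa'(X',[R_1',R_2'])$) equal $-\kappa_{-1}(R_1,R_2)$ and $\kappa'_{-1}(R_1',R_2')$. The two cross terms take the form $-\Tr(T R_i T^* R_j')$; an adjoint computation shows $T R_i T^*$ is self-adjoint in $\End(V')$ while $R_j' \in {\g'}$ is anti-self-adjoint, so this trace vanishes. This gives $\langle J_T(R_1, R_1'), J_T(R_2, R_2')\rangle_W = -\kappa_{-1}(R_1, R_2) + \kappa'_{-1}(R_1', R_2')$, which is non-degenerate on the source and hence forces $J_T$ to be injective.

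Part (iii) is the technical heart and turns on the single identity $T^*u = 0$ for every $u \in (V')^{\gamma', 1}_0$. Indeed, $X'u = 0$ and $X' = TT^*$ give $T^*u \in \Ker T$; but $T^*u \in V_1$ by weight and $T$ is injective on $V_k$ for $k \ne 0$, so $\Ker T \cap V_1 = 0$. For $S \in W_{\gamma,\gamma'}$, extended by zero to a map $V \to V'$, the adjoint $S^*$ has image in $\Ker T$ (since $S$ vanishes on the non-degenerate orthogonal complement of $\Ker T$) and vanishes on the orthogonal complement of $\Im S = (V')^{\gamma', 1}_0$. Consequently $TS^* = 0$, as $S^*$ lands in $\Ker T$; and $S^*T = 0$, since the only weight on which $S^*T$ could be nonzero is $V_{-1}$, but the bilinear reformulation $T(V_{-1}) \perp (V')^{\gamma', 1}_0$ of the above identity forces $S^*$ to vanish on $T(V_{-1})$. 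Using the cyclic property of the trace, $\langle S, TR\rangle_W = \Tr(RS^*T) = 0$ and $\langle S, R'T\rangle_W = \Tr(TS^*R') = 0$ for all $R \in \g_{-1}$, $R' \in {\g'}_{-1}$, giving $W_{\gamma,\gamma'} \subseteq (\Im J_T)^\perp$.

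For part (iv), since $J_T$ is a symplectic imbedding into the symplectic space $W_0$, one has $\dim (\Im J_T)^\perp = \dim W_0 - \dim \g_{-1} - \dim {\g'}_{-1}$, so equality with $W_{\gamma,\gamma'}$ reduces to the numerical identity $\dim W_0 = \dim \g_{-1} + \dim {\g'}_{-1} + \dim W_{\gamma,\gamma'}$. This can be verified by summing contributions block-by-block in the admissible Young tableaux of $\Orb$ and $\Orb'$: each Jordan block of $V$ of length $t_j \ge 2$ together with its partner $(t_j+1)$-block in $V'$ contributes equally to both sides, while the $1$-blocks of $\Orb$ (of multiplicity $a+b$) together with the $2^a$ and $1^s$ blocks of $\Orb'$ account for $\dim W_{\gamma,\gamma'} = \dim_D(\Ker T) \cdot \dim_D((V')^{\gamma',1}_0)$. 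The main obstacle I anticipate is this final bookkeeping, since tracking multiplicities across the generalized descent requires careful attention to the distinction between $\Ker T$ and the extra $V'$-side blocks; the other parts are formal consequences of the weight-shifting property of $T$ together with the identity $T^*((V')^{\gamma',1}_0) = 0$.
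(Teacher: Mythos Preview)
Your proposal is correct and follows essentially the same four-step structure as the paper's proof: the symplectic identity $\langle J_T(R_1,R'_1),J_T(R_2,R'_2)\rangle=-\kappa_{-1}(R_1,R_2)+\kappa'_{-1}(R'_1,R'_2)$, the orthogonality $W_{\gamma,\gamma'}\subset(\Im J_T)^{\perp}$, and a dimension match. The one difference worth noting is in part~(iv): rather than Young-tableau bookkeeping, the paper computes $\dim\g_{-1}$ and $\dim\g'_{-1}$ directly from the weight-space identities $\dim V_k=\dim V'_{k-1}$ for $k\leq -1$ and $\dim V_0=\dim V'_{-1}+\dim(\Ker T)$, arriving at $\dim\g_{-1}+\dim\g'_{-1}=\dim W_0-\dim(\Ker T)\cdot\dim((V')^{\gamma',1}_0)$ without reference to the tableau description.
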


\begin{rmk} The smooth oscillator representation of $\cover{\Sp} (W_0)$ is realized in the tensor product space $\SS_{-r,-r}\otimes \SS_{-r+1,-r+1}\cdots\otimes \SS_{-1,-1}\otimes \SY_{(0),(0)}$.
This representation naturally appears when we carry out the induction. See Lemma \ref{lem:ind}.
\end{rmk}

\begin{proof}
A straightforward computation shows that
 \[
\langle J_{T}(R_{1},{R'}_{1}),J_{T}(R_{2},{R'}_{2})\rangle=-\kappa_{-1}(R_{1},R_{2})+{\kappa'}_{-1}({R'}_{1},{R'}_{2}),
\]
for all $R_{1}$, $R_{2}\in \g_{-1}$, ${R'}_{1}$, ${R'}_{2}\in {\g'}_{-1}$. Since $\kappa_{-1}$ and ${\kappa'}_{-1}$ are non-degenerate, we see that $J_{T}$ is injective.

It is also straightforward to show that
\[
\langle S,J_{T}(R,R')\rangle = 0, \qquad \mbox{for $S\in W_{0}$, $R\in \g_{-1}$ and ${R'}\in {\g'}_{-1}$}.
\]

It remains to show that the dimensions match. Observe that there is a linear isomorphism between $\oplus_{k\leq 0} \Hom(V_{k},V_{k-1})$ and $\g_{-1}$. From this we conclude that
\begin{eqnarray*}
\dim \g_{-1} & = & \sum_{k=0}^{-r+1} \dim V_{k}\cdot \dim V_{k-1} \\
                     & = & \sum_{k=0}^{-r+1} \dim {V'}_{k-1}\cdot \dim V_{k-1} +\dim (\Ker T) \cdot\dim V_{-1}\\
                     & = & \sum_{k=-1}^{-r} \dim {V'}_{k}\cdot \dim V_{k} +\dim (\Ker T)\cdot\dim V_{-1}.
\end{eqnarray*}
Here we have used the easily checked fact that $\dim V_{0} =\dim {V'}_{-1} +\dim (\Ker T)$ and $\dim V_{k} =\dim {V'}_{k-1}$ for $k\leq 1$. C.f. the diagram in \eqref{eq:figT}.
Similarly,
\begin{eqnarray*}
\dim {\g'}_{-1}& = & \sum_{k=0}^{-r} \dim {V'}_{k}\cdot \dim {V'}_{k-1} \\
                    & = & \sum_{k=0}^{-r} \dim {V'}_{k}\cdot \dim V_{k} -\dim {V'}_{0}\cdot\dim (\Ker T) \\
                    & = & \sum_{k=0}^{r} \dim {V'}_{k}\cdot \dim V_{k} -\dim {V'}_{0}\cdot \dim (\Ker T).
\end{eqnarray*}
Here we have the fact that $\dim V_{k}=\dim V_{-k}$. Therefore we have
\begin{eqnarray*}
\dim \g_{-1}+\dim {\g'}_{-1} & = & \sum_{k=-r}^{r}\dim {V'}_{k}\cdot \dim V_{k}-\dim (\Ker T) \cdot[\dim {V'}_{0}-\dim V_{-1}]\\
                              & = & \dim ({W}_{0}) -\dim (\Ker T)\cdot[\dim V'_{0}-\dim V'_{-2}]\\
& = & \dim ({W}_{0}) -\dim (\Ker T)\cdot\dim ((V')^{{\gamma '},1}_0).
\end{eqnarray*}
\end{proof}

\section{Correspondence of associated characters}
\label{sec:AC}

In this section, the local field $\rF$ will be $\R$. We will discuss a recent result of Ma, Sun and the author \cite{MSZ}, which computes the associated character of the local theta lift, in the so-called convergent range. Related earlier works on transition of associated cycles include \cite{NOTYK,NZ,LM}.

\subsection{The associated character map}
\label{subsec:AC}
We first recall some basics of Vogan's theory of associated varieties \cite{Vo89}.  We will need a minor extension of the theory to allow covering groups which appear in the theory of dual pairs.

Let $\bfG$ be a complex reductive Lie group and $\g$ is its Lie algebra. Denote by $\Nil_{\bfG}(\g)$ the set of nilpotent $\bfG$-orbits in
$\g$. We will be concerned with genuine Casselman-Wallach representations of $\cover{G}$ and their Harish-Chandra modules. Here $(G,G')\subseteq \Sp (W)$ is a type I reductive dual pair as in Section \ref{sec:DPreview}. In this context, $\bfG$ is the complexification of $G$, which is a complex classical group.

Suppose $\Orb \in \Nil_{\bfG}(\g)$. We say that a finite length $(\g,\cover{K})$-module $\Pi$ is \emph{$\Orb$-bounded} (or \emph{bounded by $\Orb$}) if the associated variety  of the annihilator ideal $\Ann(\Pi)$ is contained in $\bcO$ (the closure of $\Orb$). Let
\[
  \g=\k\oplus \p
\]
be the complexified Cartan decomposition fixed by our choice of the maximal compact subgroup $K$ of $G$.
It follows from \cite[Theorem 8.4]{Vo89}
that  $\Pi$ is $\CO$-bounded if and only if its associated variety $\AV(\Pi)$ is contained in
$\overline \Orb \cap \p$.
Let
$\CM_{\Orb}(\g,\cover{K})$ denote the category of genuine
$\Orb$-bounded
finite length $(\g,\cover{K})$-modules, and write
$\CK_{\Orb}(\g,\cover{K})$ for its Grothendieck group.

Under the adjoint action of $\bfK$, the complex variety $\CO\cap \p$ is a union of finitely many
orbits, each of dimension $\frac{\dim_\C \CO}{2}$.  For any $\bfK$-orbit
$\sO\subset \CO\cap \p$, let $\CK_{\sO}(\wt{\bfK})$ denote the Grothendieck group of $\wt{\bfK}$-equivariant coherent sheaves (or the Grothedieck group of the category of $\wt{\bfK}$-equivariant algebraic vector bundles) on $\sO$.
Taking the isotropy representation at a point $X\in \sO$ yields an identification
\begin{equation}\label{idenkr}
  \CK_{\sO}(\wt{\bfK})=\CR(\wt{\bfK}_{X}),
\end{equation}
where the right hand side denotes  the Grothendieck group  of the category of algebraic representations of the stabilizer group $\wt{\bfK}_X$.

Put
\begin{equation}
\label{eq:dec.KO}
  \CK_{\CO}(\wtbfK):=\bigoplus_{\sO\textrm{ is a $\bfK$-orbit in $\CO\cap \p$}}
  \mathrm \CK_{\sO}(\wt{\bfK}).
\end{equation}

According to Vogan \cite[Theorem~2.13]{Vo89},  we have a canonical homomorphism, called the associated character map:
\begin{equation}
\label{def:Ch}
  \Ch_{\Orb }: \CK_{\Orb}(\g, \cover{K})\rightarrow \CK_{\CO}(\wtbfK).
\end{equation}
Briefly, starting from a good filtration of a finite length $(\g,\cover{K})$-module $\Pi$, one obtains a finitely generated $(S(\g), \cover{K})$-module $M$ by taking the graded module. Vogan shows that there is a finite filtration $\{M_j\}$ of $M$ by $(S(\g), \cover{K})$-submodules with the property that every subquotient is generically reduced along every minimal prime in the associated variety of $M$. The virtual representation $\Ch_{X}$ of $\wt{\bfK}_X$ that Vogan attaches to $X\in \sO$ is then
\[\Ch_{X}=\sum_{j}M_j/(\m(X)M_j+M_{j-1}),\]
where $\m(X)$ is the maximal ideal in $S(\g)$ corresponding to $X$. It is independent of all the choices that have been made. See \cite[Section 2]{Vo89} for details.

For a Casselman-Wallach representation of $\cover{G}$, we define its associated character by using its Harish-Chandra module.

\subsection{Algebraic theta lifting}\label{sec:ATL}
We are back in the setting of Section 1, namely we are given $(G,G')=(G(V),G(V'))\subset \Sp(W)$, a type I reductive dual pair, and the smooth oscillator representation $(\omega, \SY)$ of $\cover{\Sp}(W)$ associated to a fixed $\psi$.
Let $\Omega _{V,V'}$ be the Harish-Chandra module of $(\omega, \SY)$, which is
 naturally  a $(\g\times \g', \cover{K}\times \cover{K'})$-module.

\begin{defn}
For a genuine $(\g,\cover{K})$-module $\Pi$ of finite length,
define
\[
  \Thetav_{V,V'}(\Pi ):= \left(\Omega _{V,V'}\otimes \Pi \right)_{\g, \wtK},\qquad
  \text{(the coinvariant space).}
\]
The $(\g',\wtK')$-module $\Thetav_{V,V'}(\Pi)$, or $\Thetav(\Pi)$ in short,
is genuine and of finite length \cite{Ho89}.
\end{defn}

\begin{rmk} If $\pi$ is a smooth irreducible genuine representation of
$\cover{G}$, and let $\Pi$ denote the Harish-Chandra module of $\pi $. Then
\[
  \Thetav (\Pi )\simeq \Theta (\pi ^{\vee})^{\text{al}},\]
  where $\pi ^{\vee}$ is the contragredient representation of $\pi$, and $\Theta (\pi ^{\vee})$ is the full theta lift of
  $\pi ^{\vee}$ (in the smooth category), and $\Theta (\pi ^{\vee})^{\text{al}}$ is the Harish-Chandra module of $\Theta (\pi ^{\vee})$.
  \end{rmk}

For the moment, we let $(\bfG,\bfG')\subseteq \Sp(\bfW)$ be a complex reductive dual pair \cite{Ho79}. Recall we have the moment maps:
\cite{KP82,DKPC}
\[
\begin{diagram}
 & &  \bfW & &   \\
&  \ldTo^{\MM} & &\rdTo^{\MM'} &  \\
\g&  & & &{\g}'
\end{diagram}
\]

Let us describe the moment maps explicitly for a type I complex dual pair.
Let $\bfV$ be an $\epsilon $-symmetric  bilinear space and $\bfV'$ be an $\epsilon '$-symmetric bilinear space, where
  $\epsilon ,\epsilon '\in \{\pm1\}$ with $\epsilon \epsilon '=-1$. We have the complex symplectic space
  $\bfW := \Hom(\bfV,\bfV')$, with the symplectic form:
       \[
    <T_1, T_2>_{\bfW} := \Tr(T_1^\star T_2), \qquad T_1,T_2\in \bfW.
  \]
  Here $\star\colon \Hom(\bfV,\bfV')\rightarrow \Hom(\bfV',\bfV) $ is the adjoint map induced by the form $(\cdot , \cdot  )_{\bfV}$ on $\bfV$ and the form $(\cdot , \cdot  )_{\bfV'}$ on $\bfV'$:
  \[
    <Tv, v'>_{\bfV'} = <v,T^\star v'>_{\bfV},  \qquad v\in
    \bfV,v'\in \bfV', T\in \Hom(\bfV,\bfV').
  \]
  Then $(\bfG,\bfG'):=(\bfG_{\bfV},\bfG_{\bfV'})\subseteq \Sp(\bfW)$. The moment maps are then given by
  \[\MM (T)= T^{\star}T, \ \ \text{ and } \ \ \MM '(T)=TT^{\star}.\]

We also recall the notion of theta lift of nilpotent orbits for a complex dual pair $(\bfG,\bfG')\subseteq \Sp(\bfW)$. By \cite[Theorem 1.1]{DKPC}, for any $\CO\in \Nil_{\bfG}(\g)$, $\MM'(\MM^{-1}(\bcO))$ equals the closure of a
 unique nilpotent orbit $\CO' \in \Nil_{\bfG'}(\g')$. We call $\CO'$ the \emph{theta lift} of $\CO$, and we write
 \begin{equation}
 \label{def:LC}
  \CO'=\oliftc_{\bfV,\bfV'}(\CO).
 \end{equation}

Now we fix compatible Cartan forms $L$ on $\bfV$, $L'$ on $\bfV'$ and $L_{\bfW}$ on $\bfW$ (as in \cite[Section 2]{MSZ}), and this leads us to
a real reductive dual pair in $(G,G')\subseteq \Sp(W)$ (also called a rational dual pair). Denote $\bfK =\bfG^{L}$ and $\bfK'=(\bfG')^{L'}$.

Let $\bfii$ denote a fixed $\sqrt{-1}$. We decompose
\[
\bfW = \CX\oplus \CY
\]
where $\CX$ and $\CY$ are $+\bfii$ and $-\bfii$ eigenspaces of $L_{\bfW}$, respectively. Restriction on $\CX$ induces a pair of maps:
\[
\begin{diagram}
 & &  \CX & &   \\
&  \ldTo^{\MMP:=\MM|_{\CX}} & &\rdTo^{\MMP':=\MM'|_{\CX}} &  \\
\p&  & & &{\p}'
\end{diagram}
\]
which are also called moment maps (with respect to the rational dual pair).

We have the following estimate of the size of
$\Thetav(\pi)$.

\begin{prop}[{\cite[Theorem~B and Corollary~E]{LM}}]\label{cor:Cbound}
For any genuine $(\g,\wtK)$-module $\Pi$ of finite length, we have
\[
\AV(\Thetav(\Pi)) \subset M'(M^{-1}(\AV(\Pi))).
\]
Consequently, if $\Pi$ is $\CO$-bounded for a nilpotent
orbit $\CO\in \Nil_{\bfG}(\g)$, then $\Thetav(\Pi)$ is
$\oliftc_{\bfV,\bfV'}(\CO)$-bounded.
\end{prop}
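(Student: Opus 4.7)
The plan is to deduce the inclusion from a compatible pair of good filtrations: one on the oscillator Harish-Chandra module $\Omega_{V,V'}$ coming from its Fock realization, and one on the given module $\Pi$. In the Fock model, $\Omega_{V,V'}$ is realized on $\C[\CX]$ with elements of $\g\oplus \g'$ acting as polynomial differential operators of order $\leq 2$. After rescaling so as to place $\p$ and $\p'$ in degree $1$, one gets a $(\wtK\times \wtK')$-stable good filtration whose associated graded is $\C[\CX]$, viewed as an $(S(\p)\otimes S(\p'))$-module \emph{via the moment-map pullbacks} $\MMP^{\ast}: S(\p)\to \C[\CX]$ and $(\MMP')^{\ast}: S(\p')\to \C[\CX]$; indeed the symbols of the actions of $X\in \p$ and $X'\in \p'$ are precisely the quadratic polynomials $\MMP^{\ast}(X)$ and $(\MMP')^{\ast}(X')$ on $\CX$.

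Next I would pick any $\wtK$-stable good filtration on $\Pi$ and form the tensor-product filtration on $\Omega_{V,V'}\otimes \Pi$; it descends to a filtration on the coinvariant space $\Thetav(\Pi)=(\Omega_{V,V'}\otimes \Pi)_{\g,\wtK}$. Standard filtered-module arguments provide a graded surjection $\bigl(\gr\Omega\otimes \gr\Pi\bigr)_{S(\p),\wtK}\twoheadrightarrow \gr\Thetav(\Pi)$, and a direct computation of the diagonal $\g$-coinvariants identifies
\[
\bigl(\C[\CX]\otimes \gr\Pi\bigr)_{S(\p)} \;\cong\; \C[\CX]\otimes_{S(\p)}\gr\Pi,
\]
that is, the pullback $\MMP^{\ast}\gr\Pi$ of the coherent sheaf $\gr\Pi$ on $\p$ to $\CX$, whose set-theoretic support in $\CX$ is $\MMP^{-1}(\AV(\Pi))$. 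Regarded as an $S(\p')$-module through $(\MMP')^{\ast}$, its support in $\p'$ is therefore contained in $\MMP'(\MMP^{-1}(\AV(\Pi)))$. Since taking $\wtK$-coinvariants of a $\wtK$-equivariant coherent sheaf cannot enlarge the underlying support, we conclude $\AV(\Thetav(\Pi))\subset \MMP'(\MMP^{-1}(\AV(\Pi)))$.

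For the consequence, assume $\Pi$ is $\CO$-bounded, so that $\AV(\Pi)\subset \bcO\cap \p$ by \cite[Thm.~8.4]{Vo89}. Then
\[
\MMP'\bigl(\MMP^{-1}(\AV(\Pi))\bigr) \;\subset\; \MM'\bigl(\MM^{-1}(\bcO)\bigr)\cap \p' \;=\; \overline{\oliftc_{\bfV,\bfV'}(\CO)}\cap \p',
\]
using Theorem 1.1 of \cite{DKPC} together with the definition \eqref{def:LC} of the theta lift of nilpotent orbits. Combined with the first part, this yields $\AV(\Thetav(\Pi))\subset \overline{\oliftc_{\bfV,\bfV'}(\CO)}\cap \p'$, and a second invocation of \cite[Thm.~8.4]{Vo89} shows that $\Thetav(\Pi)$ is $\oliftc_{\bfV,\bfV'}(\CO)$-bounded.

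The main technical hurdle I anticipate is the alignment of filtrations. The natural Fock-model filtration on $\Omega_{V,V'}$ (by polynomial degree on $\CX$) places the $\p\oplus \p'$-action in degree $2$, whereas the standard good filtrations on a $(\g,\wtK)$-module put $\p$ in degree $1$. One must either pass to a rescaled/shifted filtration on $\Omega_{V,V'}$ and verify that it remains $(\wtK\times \wtK')$-stable and good with the symbol map as described, or argue on a Bernstein-type filtration of the Weyl algebra and check that the resulting bound on support is insensitive to the degree convention. Once this alignment is secured, the remainder of the proof is a clean geometric manipulation of supports of coherent sheaves under the double fibration $\p\xleftarrow{\MMP}\CX\xrightarrow{\MMP'}\p'$.
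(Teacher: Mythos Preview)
Your proposal is correct and follows essentially the same approach that the paper sketches (and that \cite{LM} carries out): pass to the Fock model so that $\gr\,\Omega_{V,V'}\cong\C[\CX]$ with $S(\p)\otimes S(\p')$ acting through the moment-map pullbacks, tensor with a good filtration on $\Pi$, and read off the support of the resulting coherent sheaf under the double fibration $\p\xleftarrow{\MMP}\CX\xrightarrow{\MMP'}\p'$. The paper's only additional remark is the twist by the character $\mktvvp$, which is needed to make the graded pieces honestly $\bfK$- and $\bfK'$-equivariant rather than merely $\wtK$- and $\wtK'$-equivariant, but this does not affect supports; your filtration-alignment caveat is exactly the standard bookkeeping (degree~$2$ versus degree~$1$) and is handled in \cite{LM} and \cite[Section~4.1]{MSZ} as the paper indicates.
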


\subsection{Descent and lift of nilpotent orbits}
\label{DesLif}

Let
\begin{equation*}
\bfWo := \{T \in \bfW | \ T \text{ is an injective map from $\bfV$ to $\bfV'$}\}.
\end{equation*}
Clearly $\bfWo\neq \emptyset$ only if $\dim \bfV\leq \dim \bfV'$.

Suppose $\bfee\in \CO \in \Nil_{\bfG}(\g)$ and
$\bfee'\in \CO' \in \Nil_{\bfG'}(\g')$.  We call $\bfee$ (resp. $\CO$) a
descent of $\bfee'$ (resp. $\CO'$), if there exists  $T\in \bfWo$ such that
$$
\MM(T) = \bfee\quad\textrm{and}\quad \MM'(T) = \bfee'.
$$
For a rational dual pair $(G,G')\subseteq \Sp(W)$, put $$\Xo := \bfW^\circ \cap \CX.$$
Suppose $X\in \sO \in \Nil_{\bfK}(\p)$ and
$X'\in \sO' \in \Nil_{\bfK'}(\p')$.  We call $X$ (resp. $\sO$) a
descent of $X'$ (resp. $\sO'$), if there exists
$T\in \Xo$ such that
$$
\MMP(T) = X\quad\textrm{and}\quad \MMP'(T) = X'.
$$
In both cases, we will say that $T$ realizes the descent, and $\CO'$ (resp. $\sO'$) is the lift of $\CO$ (resp. $\sO$).
We will write
\begin{equation}
\label{eq:descents}
\CO = \DDc(\CO')=\DDc_{\bfV', \bfV}(\CO')\quad \textrm{and}\quad  \sO = \DD(\sO')=\DD_{\bfV', \bfV}(\sO').
\end{equation}
By using explicit formulas in \cite{KP82,DKPC} (for complex dual pairs) and
\cite[Lemma~14]{Oh} (for rational dual pairs), we have the following key property:
\begin{equation*}
\MM(\MM'^{-1}(\overline{\CO})) = \overline{\CO'} \quad \text{and} \quad
\MMP(\MMP'^{-1}(\overline{\sO})) = \overline{\sO'},
\end{equation*}
where ``$\;\overline{\phantom{m}}\;$'' means taking Zariski closure.  (The notion of lift is thus stronger than the notion of theta lift for a complex nilpotent orbit.)

Given $T\in \Xo$ which realizes the descent from  $X' = \MMP'(T)\in \sO'\in \Nil_{\bfK'}(\p')$
  to   $X = \MMP(T)\in \sO\in \Nil_{\bfK}(\p)$, we denote the respective isotropy subgroups by
  \[
    \bfS_T :=
    \Stab_{\bfK\times \bfK'}(T), \quad  \bfK_X:= \Stab_{\bfK}(X)\quad \text{and}\quad \bfK'_{X'}
    := \Stab_{\bfK'}(X').
  \]
  Then there is a unique homomorphism
  \begin{equation}
    \label{eq:alpha}
    \alpha =\alpha _T\colon \bfK'_{X'} \rightarrow \bfK_{X}
  \end{equation}
  such that $\bfS_T$ is the graph of $\alpha$:
  \[
    \bfS_T = \{(\alpha(k'),k')\in \bfK_X\times \bfK'_{X'}|\ k'\in \bfK'_{X'}\}.\]
The homomorphism $\alpha$ is uniquely determined by the requirement that
\[
T(\alpha(k')(v)) = k'(T(v))\quad \textrm{ for all }v\in \bfV, \, k'\in \bfK'_{X'}.
\]

\subsection{Lift of algebraic vector bundles}
\label{subsec:lift.AC}

Attached to a dual pair $(G(V),G(V'))$, there is a distinguished genuine character
  $\mktvvp$ of $\wt{\bfK}\times\wt{\bfK'}$ arising from the oscillator module
  $\Omega _{V,V'}$, as in \cite[Section 2.7]{MSZ}.

Recall the notations from Section \ref{subsec:AC}. Thus $\sO\in \CO \cap \p$ is a $\bfK$-orbit, and we identify $\CK_{\sO}(\wtbfK)$ with $\CR(\wtbfK_{X})$, the Grothendieck group of the category of genuine algebraic representations of the stabilizer group $\wtbfK_X$.

  Let $\rho$ be a genuine algebraic representation  of $\wtbfK_{X}$. Then the representation $\mktvvp|_{\wt{\bfK}_{X}}\otimes \rho$ of $\wtbfK_{X}$ descends  to a representation of $\bfK_{X}$.  Define
  \begin{equation*}
    \dliftv_{T}(\rho):= \mktvvp|_{\wt{\bfK'}_{X'}} \otimes (\mktvvp|_{\wt{\bfK}_{X}}\otimes \rho)\circ \alpha,
  \end{equation*}
  which is a genuine algebraic representation of $\wt{\bfK'}_{X'}$.

  Clearly $\dliftv_T$ induces a homomorphism from
  $\CR(\wtbfK_{X})$ to $\CR(\wtbfK'_{X'})$.
  In view of \eqref{idenkr}, we thus have a homomorphism
  \begin{equation*}
    \xymatrix{
      \dliftv_{\sO,\sO'}\colon \CK_{\sO}(\wt{\bfK}) \ar[r]&
      \CK_{\sO'}(\wt{\bf{K'}}).
    }
  \end{equation*}
  This is independent of the choice of $T$.

  Suppose $\CO\in\Nil_{\bfG}(\g)$, $\CO'\in \NilGCp$ and $\CO =\DDc(\CO')$. Using the decomposition in \eqref{eq:dec.KO}, we define a homomorphism
  \begin{equation}\label{eq:DS.chc}
    \xymatrix{
      \dliftv_{\CO,\CO'} := \displaystyle\sum_{\substack{\sO'\subset \CO'\cap \p'\\ \sO =
          \DD(\sO') \subset \p}}\dliftv_{\sO,\sO'}\colon \CK_{\CO}(\wt{\bfK}) \ar[r]&
      \CK_{\CO'}(\wt{\bf{K'}})
    }
  \end{equation}
  where the summation
  is over all pairs  $(\sO, \sO')$ such that $\sO\subset \p$ is the descent of
  $\sO'\subset \CO'\cap \p'$.

\subsection{An upper bound and an equality of associated characters}
\label{subsec:UBEQ}

We have an upper bound of associated characters in the descent situation.

Recall the natural partial order $\succeq$ on $\CK_{\CO}(\wt{\bfK})$ and
  $\CK_{\sO}(\wt{\bfK})$ defined as follows. We say $c_1\succeq c_2$ (or $c_2\preceq c_1$) if $c_1-c_2$ is represented by a
  $\wt{\bfK}$-equivariant coherent sheaf.

\begin{prop}[{\cite[Theorem~4.3]{MSZ}}]
\label{prop:GDS.AC}
  Let $\CO\in \NilGC$, $\CO'\in \NilGCp$ and $\CO$ is a descent of $\CO'$.
  Then for every genuine $\CO$-bounded $(\g,\wtK)$-module $\Pi$ of finite length,  $\Thetav_{V,V'}(\Pi)$ is $\CO'$-bounded and
  \[
    \Ch_{\CO'}(\Thetav _{V,V'}(\Pi))\preceq \dliftv_{\CO,\CO'}(\Ch_{\CO}(\Pi)).
  \]
\end{prop}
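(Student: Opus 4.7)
The proof splits into establishing $\CO'$-boundedness and then the inequality on associated characters. For the first, note that when $\CO$ is a descent of $\CO'$, the stronger identity displayed right after \eqref{eq:descents} forces $\CO' = \oliftc_{\bfV,\bfV'}(\CO)$, so Proposition \ref{cor:Cbound} yields $\CO'$-boundedness of $\Thetav_{V,V'}(\Pi)$ immediately. The substance is the inequality, for which the plan is to produce compatible good filtrations, identify the associated graded of $\Thetav_{V,V'}(\Pi)$ geometrically through the double fibration $\p \leftarrow \CX \rightarrow \p'$ of moment maps, and localize at a generic point of each descent orbit.

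I would begin with a $\wtK$-stable good filtration $\{F_n \Pi\}$ so that $\gr \Pi$ is a finitely generated $(S(\g), \wtK)$-module supported on $\bcO \cap \p$. Realize the oscillator module $\Omega_{V,V'}$ in the Schr\"odinger model on $\C[\CX]$ with the degree filtration; its associated graded is $\C[\CX]$ with the $S(\g \oplus \g')$-action given by pullback of polynomials along $(\MMP, \MMP'): \CX \to \p \oplus \p'$. The tensor-product filtration on $\Omega_{V,V'} \otimes \Pi$ descends to the coinvariants, and right-exactness of $(-)_{\g, \wtK}$ gives a surjection
\[
\bigl(\C[\CX] \otimes_{\C} \gr \Pi\bigr)_{S(\g), \wtK} \twoheadrightarrow \gr \Thetav_{V,V'}(\Pi)
\]
of $(S(\g'), \wtK')$-modules, where the $\wtK'$-action on the left picks up the twist $\mktvvp$ from the metaplectic/half-density identifications. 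Geometrically the left side represents the pushforward along $\MMP'$ of the pullback along $\MMP$ of the coherent sheaf $\gr \Pi$ on $\p$, with support contained in $\MMP'(\MMP^{-1}(\bcO \cap \p)) \subset \bcO' \cap \p'$.

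The heart of the argument is then a local calculation at each $\bfK'$-orbit $\sO' \subset \CO' \cap \p'$. Picking $T \in \Xo$ that realizes the descent $\sO = \DD(\sO')$ and setting $X = \MMP(T)$, $X' = \MMP'(T)$, one must verify that the isotropy representation of $\wtbfK'_{X'}$ on the fiber of the geometric model at $X'$ equals the isotropy representation of $\wtbfK_X$ at $X$ (which encodes $\Ch_\CO(\Pi)$ via \eqref{idenkr}) pulled back through the homomorphism $\alpha_T$ of \eqref{eq:alpha} and twisted by $\mktvvp$ on both ends. This is precisely the definition of $\dliftv_{\sO, \sO'}$. Summing over descent pairs $(\sO, \sO')$ recovers $\dliftv_{\CO, \CO'}(\Ch_\CO(\Pi))$ as the associated character of the geometric model sheaf, whereas $\Ch_{\CO'}(\Thetav_{V,V'}(\Pi))$ comes from the quotient of that sheaf by the kernel of the surjection above. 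That kernel is an honest $\wtbfK'$-equivariant coherent sheaf, producing exactly $\preceq$ rather than equality.

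The main obstacle is the local analysis at a descent point $T$: one must correctly identify the local structure of the map $(\MMP, \MMP')$ near $T$ so that the fiber geometry produces the homomorphism $\alpha_T$, and track the oscillator character $\mktvvp$ carefully through the Schr\"odinger model and its metaplectic twist, since an error here would shift the answer by a character of $\wtbfK'_{X'}$. A secondary technical point is compatibility with the additional filtration in Vogan's construction of $\Ch_\CO$ (needed so that the graded module is generically reduced along each minimal prime in its associated variety); one must check that this refinement commutes with pushforward along $\MMP'$ up to a boundary-supported correction that can be absorbed into $\preceq$.
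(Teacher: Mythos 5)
Your approach matches the paper's proof sketch: good filtrations on $\Pi$ and on $\Thetav_{V,V'}(\Pi)$ (the paper notes they are generated by minimal-degree $\wtK$- and $\wtK'$-types), the $\mktvvp$-twist to pass to $\bfK$- and $\bfK'$-equivariant coherent sheaves on $\p$ and $\p'$, and a geometric analysis of the transfer of such sheaves through the double fibration $\p \leftarrow \CX \rightarrow \p'$, with the $\preceq$ arising because coinvariants only give a surjection at the graded level. The paper defers precisely the local fiber computation at a descent point and the compatibility with Vogan's refined filtration to \cite[Section 4.1]{MSZ}, which are exactly the two technical points you flag at the end, so your proposal aligns with the paper's argument both in structure and in where the real work lies.
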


\begin{rmk} We may also define the notion of generalized descent in the setting of complex or rational dual pairs \cite[Section 2.6]{MSZ}. Proposition \ref{prop:GDS.AC} extends to a more general situation, where $\CO'$ is ``good for generalized descent''. See \cite[Section 4.1]{MSZ}.
\end{rmk}

The proofs of Propositions \ref{cor:Cbound} and \ref{prop:GDS.AC} are similar and are largely geometric. There are natural good filtrations on $\Pi$ and $\Thetav_{V,V'}(\Pi)$, which are generated by the minimal degree $\wtK$-types and $\wtK'$-types, respectively. After twisting by $\mktvvp$, their graded spaces may be viewed as a $\bfK$-equivariant coherent sheaf on $\p$ and $\bfK'$-equivariant coherent sheaf on $\p'$, respectively. Thus it boils down to understanding this lifting of equivariant coherent sheaves induced by the double fiberation of moment maps, and this can be readily analyzed via algebraic-geometric techniques. See \cite[Section 4.1]{MSZ} for details.

The final theorem we will discuss is more precise and it is concerned with the associate character of a quotient of $\Thetav_{V,V'}(\pi)$, where $\pi $ is a certain genuine irreducible Casselman-Wallach representation of $\wtG$. Note that by Proposition \ref{prop:GDS.AC}, we will have an upper bound on the full maximal quotient $\Thetav_{V,V'}(\pi)$.

We will review the notion of convergent range in the setting of local theta correspondence (\cite[Sections 3.1 and 3.2]{MSZ}).
First recall that a function on a real reductive group $G$ is called $\nu$-bounded if it is bounded by the $\nu$-th power of the Harish-Chandra's $\Xi$-function $\Xi_G$ on $G$, up to a function of logarithmetic growth. From the well-known properties of $\Xi_G$, we see that every $\nu$-bounded continuous function on $G$ is integrable, for $\nu >2$.
One may also define the notion of $\nu$-boundedness for a Casselman-Wallach representation in the obvious way, by using its matrix coefficients. Under this terminology, tempered representations are then $1$-bounded.

Fix a type I dual pair $(G,G')=(G(V),G(V'))$ as in Section \ref{sec:DPreview}, where $V$ is an $\epsilon$-Hermitian space and $V'$ is an $\epsilon'$-Hermitian space.
Define
\begin{equation}
\dim_{\rF}^{\circ} V = \dim_{\rF} V -2\frac{d_1}{d},
\end{equation}
where $d=\dim_{\rF}D$, and $d_1=\dim_{\rF}\{t\in D|\bar{t}=\epsilon t\}$. Here the superscript bar denotes the standard involutive anti-automorphism of $D$.

We have the following estimate of matrix coefficients. See \cite[Lemma 3.8]{MSZ} or \cite[Theorem 3.2]{Li89}.
\begin{lem}
 The representation $\omega_{V,V'}|_{\cover{G}}$ is $\frac{\dim_{\rF} V'}{\dim_{\rF}^{\circ}V}$-bounded.
\end{lem}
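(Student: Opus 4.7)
The plan is to reduce the bound to an explicit computation of matrix coefficients of $\omega_{V,V'}|_{\cover G}$ on a maximally split torus of $G$, following the strategy of Li in \cite{Li89}. The key point is that the oscillator representation, when realized in a suitable mixed Schr\"odinger model, admits an explicit formula for the action of a split Cartan subgroup, and the bound falls out by comparing the resulting decay with the known asymptotics of $\Xi_G$.

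First, I would fix a Witt decomposition $V = X \oplus V^{\mathrm{an}} \oplus X^{*}$, where $X$ is a maximal totally isotropic $\rD$-subspace and $V^{\mathrm{an}}$ is the anisotropic kernel; the $\rF$-dimension of $V^{\mathrm{an}}$ is exactly $\dim_{\rF}^{\circ} V$ (minus a contribution already accounted for in the definition), and the $\rF$-split rank $r$ of $G$ equals $\dim_{\rD} X$. Dualizing, this gives a polarization of $W = \Hom_{\rD}(V,V')$ and a realization of $(\omega_{V,V'}, \mathscr{Y})$ on $\mathcal{S}\big(\Hom_{\rD}(X \oplus V^{\mathrm{an}}, V')\big)$. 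I would then invoke the Cartan decomposition $G = K A^{+} K$: since $\Xi_G$ is bi-$K$-invariant and every matrix coefficient between $K$-finite vectors is bi-$K$-equivariant after averaging, it suffices to bound $\langle \omega(a)\phi, \psi\rangle$ for $a \in A^{+}$ and $\phi,\psi$ in a dense subspace of $\mathscr{Y}$.

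Second, I would compute the action of $a \in A$ on the chosen Schr\"odinger model. Writing $a = \mathrm{diag}(a_1,\dots,a_r, 1_{V^{\mathrm{an}}}, a_1^{-1},\dots,a_r^{-1})$ (with appropriate reality/conjugation conventions), the Weil formula gives
\[
(\omega(a)\phi)(T) = |a_1 \cdots a_r|_{\rF}^{\dim_{\rF} V'/d}\,\phi(T \circ a)
\]
(up to a unitary character on the metaplectic cover), so the matrix coefficient $|\langle \omega(a)\phi,\psi\rangle|$ is bounded by a Schwartz seminorm times $|a_1 \cdots a_r|_{\rF}^{-\dim_{\rF} V'/d}$ after a change of variables. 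On the other hand, the Harish-Chandra $\Xi$-function satisfies the standard asymptotic $\Xi_G(a) \asymp a^{-\rho_G}\cdot(\text{log terms})$ on $A^{+}$, where $\rho_G$ is the half-sum of positive restricted roots; a direct computation for each of the three cases of $\rD$ gives $\langle \rho_G, \log a\rangle = \tfrac{1}{2}\dim_{\rF}^{\circ} V\cdot \sum \log|a_i|_{\rF}$ (up to the usual shift by $1$). Comparing the two exponents yields precisely the ratio $\nu = \dim_{\rF} V' / \dim_{\rF}^{\circ} V$, and the lemma follows after absorbing the contribution of $V^{\mathrm{an}}$ (which is compact modulo center) into a uniform bound over $K$-orbits.

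The main technical obstacle is matching the normalization factors uniformly across the three types of $\rD$ (especially the metaplectic cocycle and the factor $d_1/d$ appearing in the definition of $\dim_{\rF}^{\circ} V$) and dealing with the non-quasi-split inner forms where the split rank is smaller than the $\rD$-Witt index; in that case one first restricts to the anisotropic kernel's decomposition and uses that the complementary direction contributes only bounded factors, exactly as in Li's original argument. Once the exponent computation is carried out case by case, the logarithmic correction in the definition of $\nu$-boundedness absorbs all remaining polynomial discrepancies, completing the estimate.
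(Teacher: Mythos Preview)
The paper does not prove this lemma; it simply cites \cite[Lemma~3.8]{MSZ} and \cite[Theorem~3.2]{Li89}. Your plan is exactly the argument of Li in \cite{Li89}, so in that sense you are reproducing what the paper defers to.

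One point in your sketch needs correction. You write $\langle \rho_G,\log a\rangle = \tfrac{1}{2}\dim_{\rF}^{\circ}V\cdot\sum_i \log|a_i|_{\rF}$, but the coefficients of $\rho_G$ on the restricted simple coroots are \emph{not} all equal for the classical groups in question; they decrease linearly. What is actually true, and what makes the argument work, is that the \emph{largest} coefficient of $\rho_G$ equals $\tfrac{1}{2}\dim_{\rF}^{\circ}V$. Since the oscillator matrix coefficient decays with the \emph{same} exponent $\tfrac{1}{2}\dim_{\rF}V'$ in every split direction (your formula for the action of $a$ is off by this same factor of $2$, which is why your ratio comes out right), the binding constraint for $\nu$-boundedness on $A^+$ is precisely
\[
\nu \leq \frac{\tfrac{1}{2}\dim_{\rF}V'}{\max_i (\rho_G)_i} = \frac{\dim_{\rF}V'}{\dim_{\rF}^{\circ}V}.
\]
Once you state it this way, the case-by-case check across the three division algebras (and the role of $d_1/d$) becomes the verification that the top $\rho_G$-coefficient really is $\tfrac{1}{2}\dim_{\rF}^{\circ}V$, which is a short root-system computation. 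With that fix, your plan is complete and matches Li's proof.
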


We thus make the following

\begin{defn}\label{defn:CR} Assume that $\dim_{\rF}^{\circ} V >0$. Let $\pi$ be a genuine Casselman-Wallach representation of $\cover{G}$.
   The pair $(\pi, V')$ (or $(\pi, G')$) is said to be in the \emph{convergent range} if $\pi$ is $\nu_\pi$-bounded for some $\nu_\pi>2-\frac{\dim_{\rF} V'}{\dim_{\rF}^{\circ}V}$.
\end{defn}

Suppose that $(\pi, V')$ is in the convergent range. This ensures that the following integral is convergent:
\begin{equation}\label{intpios}
  \xymatrix@R=0em@C=3em{
   (\pi\widehat \otimes \omega_{V,V'})\times (\pi^\vee \widehat \otimes
   \omega_{V,V'}^\vee)
   \ar[r]&\C,\hspace*{5em}\\
   (u,v)\ar@{|->}[r] &\int_{\wt{G'}} <g\cdot u, v> \mathrm{d} g.
   }
\end{equation}
Define
\begin{equation*}
\label{thetab0}
  \Thetab_{V,V'}(\pi):=\frac{\pi\widehat \otimes \omega_{V,V'}}{\textrm{the left kernel of \eqref{intpios}}}.
\end{equation*}
This is a Casselman-Wallach representation of $\wt{G'}$, since it is a quotient of $(\pi\widehat \otimes \omega_{V,V'})_G$ (the Hausdorff coinvariant space, which is the full theta lift of $\pi ^{\vee}$).

As usual we parameterize a nilpotent $\bfG'$-orbit $\CO'$ in $\g'$ by its Young diagram ${\bf D}(\CO')$, but labelled by its column partition $[c_0,c_1,\cdots, c_k]$, where
$c_0\geq c_1\geq \cdots \geq c_k > 0$, and $\sum_{l=0}^k c_l=\dim \bfV'$ (and with some additional constraints on $c_l$'s depending on $\g'$).

\begin{thm}[{\cite[Theorem~4.4]{MSZ}}]
\label{asso}
Let  $\CO\in \Nil_{\bfG}(\g)$ and  $\CO'\in \Nil_{\bfG'}(\g')$  such that
    $\CO$ is the descent of $\CO'$.  Write ${\bf D}(\CO') =
    [c_0,c_1, \cdots, c_k]$ ($k\geq 1$).  Assume that
   $c_0> c_1$ when $G$ is a real symplectic group.  Then for every $\CO$-bounded irreducible Casselman-Wallach representation $\pi$ of $\wtG$ such that $(\pi, V')$ is in
  the convergent range, $\Thetab_{V,V'}(\pi)$ is $\CO'$-bounded and
  \begin{equation*}
  \label{eq:LCh}
    \Ch_{\CO'}(\Thetab_{V,V'}(\pi))= \dliftv_{\CO,\CO'} (\Ch_{\CO}(\pi)).
  \end{equation*}
\end{thm}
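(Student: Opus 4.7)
The plan is to derive the equality as a sharpening of the upper bound supplied by Proposition \ref{prop:GDS.AC}. By its very definition $\Thetab_{V,V'}(\pi)$ is a quotient of the Hausdorff $\wtG$-coinvariant space of $\pi\widehat\otimes \omega_{V,V'}$, whose Harish-Chandra module is a form of the algebraic theta lift to which Proposition \ref{prop:GDS.AC} applies. Combined with the monotonicity of $\Ch_{\CO'}$ under surjections of $\CO'$-bounded modules, this immediately gives
\[
\Ch_{\CO'}(\Thetab_{V,V'}(\pi))\preceq \dliftv_{\CO,\CO'}(\Ch_{\CO}(\pi)).
\]
The substance of the theorem is the reverse inequality.

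For the reverse inequality, I would work with compatible good filtrations: a $\wtK$-stable good filtration on the Harish-Chandra module of $\pi$ generated by its minimal $\wtK$-types, whose associated graded realizes $\Ch_{\CO}(\pi)$, combined with the natural degree filtration on the Harish-Chandra module of $\omega_{V,V'}$ coming from the Fock model, such that after twisting by $\mktvvp$ its associated graded is the structure sheaf of $\CX$ as a $\bfK\times\bfK'$-equivariant coherent sheaf. The induced filtration on $\pi\widehat\otimes \omega_{V,V'}$ descends to $\Thetab_{V,V'}(\pi)$, and the standard double-fibration recipe via the moment maps $\MMP$ and $\MMP'$ identifies its associated graded on $\p'$ with the coherent sheaf computing $\dliftv_{\CO,\CO'}(\Ch_{\CO}(\pi))$: the orbit-by-orbit description in \eqref{eq:DS.chc} matches the pullback-twist-pushforward construction through the identification of $\bfS_T$ with the graph of $\alpha_T$ in \eqref{eq:alpha}.

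The convergent range hypothesis is introduced precisely to guarantee that no expected graded multiplicity is lost when passing to the quotient. The absolutely convergent integral in \eqref{intpios} produces a nondegenerate $\wt{G'}$-invariant pairing between $\Thetab_{V,V'}(\pi)$ and $\Thetab_{V,V'}(\pi^\vee)$. Running the same filtration construction on $\pi^\vee$ and analyzing the induced bilinear form on the associated graded, I would show that this pairing is nondegenerate at the level of $\wt{\bfK'}$-equivariant coherent sheaves at a generic point of each $\bfK'$-orbit $\sO'\subset \CO'\cap \p'$, forcing the filtration on $\Thetab_{V,V'}(\pi)$ to retain every expected graded piece. The assumption $c_0>c_1$ in the symplectic case enters here: it ensures that a generic element $X'\in \CO'\cap \p'$ admits a preimage in $\Xo$, so that every summand in \eqref{eq:DS.chc} comes from a genuine descent $\sO = \DD(\sO')$ and the two sides of the inequality actually refer to the same object.

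The principal obstacle I expect is verifying that the analytically defined left kernel of \eqref{intpios} is controlled by the algebraic filtration, and more precisely that the bound $\nu>2-\frac{\dim_{\rF} V'}{\dim_{\rF}^{\circ} V}$ on matrix coefficients of $\pi$ supplies exactly the uniform integrability required to prove nondegeneracy of the induced pairing on the graded pieces concentrated near a generic point of $\CO'\cap \p'$. This is the technical heart of the argument, combining moment-map geometry, precise asymptotics of oscillator matrix coefficients, and the combinatorial role of $c_0>c_1$ in the column partition $[c_0,c_1,\ldots,c_k]$ of $\mathbf{D}(\CO')$.
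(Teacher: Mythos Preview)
Your upper bound argument is correct and matches the paper's first step: $\Thetab_{V,V'}(\pi)$ is a quotient of the full theta lift, so Proposition~\ref{prop:GDS.AC} yields $\Ch_{\CO'}(\Thetab_{V,V'}(\pi))\preceq \dliftv_{\CO,\CO'}(\Ch_{\CO}(\pi))$.

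Your approach to the reverse inequality, however, is genuinely different from the paper's and contains a real gap. You propose to show directly that the analytic pairing \eqref{intpios} induces a nondegenerate pairing on associated gradeds at generic points of each $\sO'\subset\CO'\cap\p'$. But nondegeneracy of the pairing on $\Thetab_{V,V'}(\pi)$ is tautological (you have already quotiented by the left kernel); what you need is that this quotient is not \emph{too small}, i.e.\ that no graded multiplicity is lost. There is no evident mechanism by which convergence of the scalar-valued integral \eqref{intpios} forces the induced pairing on graded pieces to be nondegenerate: the pairing need not respect the filtration degree, and you give no argument linking integrability of matrix coefficients to fiberwise nondegeneracy of a sheaf pairing. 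This is not a minor technical point but the entire content of the theorem.

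The paper's route is indirect and quite different. One performs a \emph{second} theta lift by integration: choose a split space $U$ of the same type as $V$, roughly twice the size of $V'$, embed $V\subset U$ with orthogonal complement $V^\perp$, and form $\Thetab_{V',V^\perp}(\Thetab_{V,V'}(\pi))$. A variant of the doubling method (via the Rallis quotient for the pair $(U,V')$) places this two-step lift inside an explicit degenerate principal series of $\cover{G(V^\perp)}$, whose structure relative to Rallis quotients is known from work of Kudla--Rallis, Lee--Zhu, and Yamana. A result of Barbasch then pins down the associated cycle of this two-step lift. On the other hand, applying the upper bound of Proposition~\ref{prop:GDS.AC} twice gives an upper bound for the same object. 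Comparing the two forces equality at the intermediate stage, which is the desired identity for $\Thetab_{V,V'}(\pi)$. The hypothesis $c_0>c_1$ in the real symplectic case is used in this comparison step (a case-by-case check), not, as you suggest, merely to guarantee preimages in $\Xo$.
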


\begin{rmk} (a). Theorem \ref{asso} is the most crucial technical ingredient of \cite{MSZ} on the construction of unipotent representations of real classical groups. When $(V, V')$ is in the stable range, the result was proved for unitary representations $\pi $ in \cite{LM}. (b) Note that in \cite{MSZ} the authors
only consider the cases of real (or quaternionic) orthogonal and symplectic groups. The case of unitary groups is proved by a similar method.
\end{rmk}

We outline the basic ideas in the proof of Theorem \ref{asso} for the cases of real (or quaternionic) orthogonal and symplectic groups and offer brief remarks for the case of unitary groups.

First observe that the Harish-Chandra module of $\Thetab_{V,V'}(\pi)$ is isomorphic to a quotient of
$\Thetav(\pi^{\mathrm{al}})$, where $\pi^{\mathrm{al}}$ denotes the Harish-Chandra module of $\pi$. Thus Proposition \ref{prop:GDS.AC} implies that $\Thetab_{V,V'}(\pi)$ is $\CO'$-bounded and
\begin{equation}
\label{boundch}
\Ch_{\CO'}(\Thetab_{V,V'}(\pi))\preceq \dliftv_{\CO,\CO'}(\Ch_{\CO}(\pi)).
\end{equation}

In order to show that the equality in \eqref{boundch} is actually achieved, we will apply another (appropriate) theta lifting by integration. This idea is due to He \cite{He}, who used it to show the nonvanishing of a certain theta lift. We choose a split space $U$, of the same type as $V$, which is roughly twice the size of $V'$. The reader is referred to \cite[Section 3.5]{MSZ} for the precise description of $U$. The space $V$ is then realized as a non-degenerate subspace of $U$ and we write $V^\perp$ for the orthogonal complement of $V$ in $U$. We will consider the two-step theta lifting by integration $\Thetab_{V',V^\perp}(\Thetab_{V,V'}(\pi))$.

The Rallis quotient (the maximal quotient of the trivial representation of $G(V')$ with respect to the dual pair $(U, V')$ will now sit inside a degenerate principal series representation of $\cover{G(U)}$ induced from a certain character of its Siegel parabolic subgroup. Because of our choice of $U$, the corresponding degenerate principal series is on the unitary axis when $G$ is a real orthogonal or a real symplectic group, and is induced from $\det^{\frac{1}{2}}$ (resp. $\det^{-\frac{1}{2}}$) when $G$ is a quaternionic orthogonal group (resp. a quaternionic symplectic group). Here $\det$ denotes the reduced norm of an appropriate general linear group in the quaternionic case. Note that the structure of afore-mentioned degenerate principal series of $\cover{G(U)}$ relative to Rallis quotients is well-known by the work of Kudla-Rallis \cite{KR1}, Lee-Zhu \cite{LZ2,LZ3} (for real orthogonal and real symplectic groups) and Yamana \cite{Ya} (for quaternionic orthogonal and quaternionic symplectic groups). (For unitary groups, the corresponding results are in \cite{LZ1}.)

One shows that $(\Thetab_{V, V'}(\pi), V^\perp)$ is in the convergent range. The decisive observation (\cite[Lemma 4.5]{MSZ} and \cite[Proposition 3.16]{MSZ})
which enables us to conclude the desired equality of associated characters is as follows: via a variant of the well-known doubling method (c.f. \cite{Ra,MVW}), the two-step theta lifting $\Thetab_{V',V^\perp}(\Thetab_{V,V'}(\pi))$ is inside a certain degenerate principal series representation of
$\cover{G (V^\perp)}$ which is intimately related to the aforementioned degenerate principal series of $\cover{G(U)}$ and in a completely explicit way. A result of Barbasch \cite{Ba00} then allows us to determine the associate cycle of $\Thetab_{V',V^\perp}(\Thetab_{V,V'}(\pi))$.

On the other hand, there is an upper bound of the associate character of $\Thetab_{V',V^\perp}(\Thetab_{V,V'}(\pi))$ (by applying Proposition \ref{prop:GDS.AC} twice). By comparing the associated cycles, one concludes that there is no other choice except to have the desired equality in \eqref{boundch}. This comparison is also the step where one needs to do a case-by-case check for real orthogonal and real symplectic groups, and for quaternionic orthogonal and quaternionic symplectic groups, and similarly for unitary groups.

\end{document}